\renewcommand{\H}{\mathbb{H}}
\newcommand{\N}{\mathbb{N}}
\newcommand{\R}{\mathbb{R}}
\newcommand{\Z}{\mathbb{Z}}
\newcommand{\Hz}{\mathbb{H}_\Z}
\newcommand{\cF}{\mathcal{F}}
\newcommand{\length}{\ell}
\newtheorem{thm}{Theorem}[section]
\newtheorem{lemma}[thm]{Lemma}
\newtheorem{lem}[thm]{Lemma}
\newtheorem{prop}[thm]{Proposition}
\newtheorem{cor}[thm]{Corollary}
\newcommand{\bt}{\begin{thm}}
\newcommand{\et}{\end{thm}}
\newcommand{\bc}{\begin{cor}}
\newcommand{\ec}{\end{cor}}
\newcommand{\bl}{\begin{lem}}
\newcommand{\el}{\end{lem}}
\newcommand{\bp}{\begin{prop}}
\newcommand{\ep}{\end{prop}}
\newtheorem{defn}[thm]{Definition}
\newcommand{\bd}{\begin{defn}}      
\newcommand{\ed}{\end{defn}}
\newtheorem{quest}[thm]{Question}
\newcommand{\bq}{\begin{quest}}
\newcommand{\eq}{\end{quest}}
\theoremstyle{remark}
\newcommand{\br}{\begin{rmrk}}
\newcommand{\er}{\end{rmrk}}
\DeclareMathOperator{\id}{id}
\DeclareMathOperator{\diam}{diam}
\DeclareMathOperator{\Lip}{Lip}
\DeclareMathOperator{\inter}{int}
\DeclareMathOperator{\vol}{vol}
\DeclareMathOperator{\avol}{avol}
\DeclareMathOperator{\Ar}{Ar}
\DeclareMathOperator{\supp}{supp}
\DeclareMathOperator{\FV}{FV}
\newcommand{\from}{\colon}
\date{April 1, 2021}
\title{Constructing H\"older maps to Carnot groups}
\begin{document}
\bibliographystyle{plain}

\author{Stefan Wenger}

\address
  {Department of Mathematics\\ University of Fribourg\\ Chemin du Mus\'ee 23\\ 1700 Fribourg, Switzerland}
\email{stefan.wenger@unifr.ch}

\author{Robert Young}

\address{Courant Institute of Mathematical Sciences\\
  New York University\\
  251 Mercer St.\\
  New York, NY  10012\\
  USA}
\email{ryoung@cims.nyu.edu}

\thanks{S.~W.~was partially supported by Swiss National Science Foundation Grants 165848 and 182423.  R.~Y.~was supported by a Sloan Research Fellowship and by National Science Foundation grants 1612061 and 2005609. Parts of this paper were written while R.~Y.~was a visiting member at the Institute for Advanced Study, supported by NSF grant 1926686. }

\begin{abstract}
In this paper, we construct H\"older maps to Carnot groups equip\-ped with a Carnot metric, especially the first Heisenberg group \(\H\). Pansu and Gromov \cite{Gro-CC-96} observed that any surface embedded in $\H$ has Hausdorff dimension at least \(3\), so there is no \(\alpha\)--H\"older embedding of a surface into \(\H\) when \(\alpha>\frac{2}{3}\).  Z\"ust \cite{Zus15} improved this result to show that when \(\alpha>\frac{2}{3}\), any \(\alpha\)--H\"older map from a simply-connected Riemannian manifold to $\H$ factors through a metric tree.  In the present paper, we show that Z\"ust's result is sharp by constructing \((\frac{2}{3}-\epsilon)\)--H\"older maps from \(D^2\) and \(D^3\) to $\H$ that do not factor through a tree.  We use these to show that if $0<\alpha < \frac{2}{3}$, then the set of $\alpha$--H\"older maps from a compact metric space to $\H$ is dense in the set of continuous maps and to  construct proper degree--1 maps from \(\R^3\) to \(\H\) with H\"older exponents arbitrarily close to \(\frac{2}{3}\).  
\end{abstract}

\maketitle

\section{Introduction and statement of results}

The first Heisenberg group $\H$, equipped with a Carnot metric, is a subriemannian manifold.  The Hausdorff dimension of such a manifold is greater than its topological dimension; the Heisenberg group, for instance, has topological dimension \(3\) and Hausdorff dimension \(4\).  It follows that there is no surjective Lip\-schitz map from \(\R^3\) to \(\H\), since Lipschitz maps cannot increase Hausdorff dimension.  Indeed, the image of the $3$--dimensional unit ball \(D^3\) under an \(\alpha\)--H\"older map has Hausdorff dimension at most \(\frac{3}{\alpha}\), so when \(\alpha > \frac{3}{4}\), there is no \(\alpha\)--H\"older map from $D^3$ to $\H$ whose image contains a metric ball.

When \(\alpha<\frac{3}{4}\), a construction like that of Kaufman \cite{Kau79} can be used to construct an \(\alpha\)--H\"older map from \(D^3\) to \(\H\) whose image contains a ball, but when \(\frac{2}{3}<\alpha<\frac{3}{4}\), the topology of such maps is very restricted.  These conditions arise from the fact, proved by Gromov in \cite[0.6.C, 2.1]{Gro-CC-96} and therein also attributed to Pansu, that any surface embedded in \(\H\) has topological dimension \(2\) but Hausdorff dimension at least \(3\), so if \(\alpha>\frac{2}{3}\), then the image of a surface under an \(\alpha\)--H\"older map cannot be a surface.  Indeed, Z\"ust \cite{Zus15} showed that if \(M\) is a simply-connected Riemannian manifold and \(f\from M\to \H\) is \(\alpha\)--H\"older with \(\alpha>\frac{2}{3}\), then \(f\) factors through a metric tree.  Moreover, Le Donne and Z\"ust \cite{LDZ13} proved that if $\alpha>\frac{1}{2}$ then any $\alpha$--H\"older surface in $\H$ (if it exists) must intersect many vertical lines in a topological Cantor set. 

In \cite[0.5.C]{Gro-CC-96}, Gromov asked: 
\begin{quote}
Given two [Carnot--Carathéodory] spaces $V$ and $W$ and a real number $0<\alpha\le 1$, describe the space of $C^\alpha$--maps $f\from W\to V$. For example, when can each continuous map $W\to V$ be uniformly approximated by $C^\alpha$--maps? When can $W$ be $C^\alpha$--embed\-ded into $V$? When are $V$ and $W$ $C^\alpha$ homeomorphic? etc.
\end{quote}
 The special case of finding the maximum $\alpha$ such that there is an $\alpha$--H\"older homeomorphism from $\R^3$ to $\H$ has become known as the H\"older equivalence problem.
 It follows from the results of Pansu and Gromov \cite[2.1]{Gro-CC-96} or Z\"ust \cite{Zus15}, both mentioned above, that if $\alpha>\frac{2}{3}$ then there is no locally $\alpha$--H\"older homeomorphism from $\R^3$ to $\H$ and that continuous maps from $\R^3$ to $\H$ cannot be approximated by $\alpha$--H\"older maps.
On the other hand, a smooth or $C^2$ map from $\R^3$ to $\H$ is locally $\frac{1}{2}$--H\"older, so a continuous map from $\R^3$ to $\H$ can be approximated by a locally $\frac{1}{2}$--H\"older map, and there are many locally $\frac{1}{2}$--H\"older homeomorphisms from $\R^3$ to $\H$.

 In this paper, we will partially answer Gromov's question by showing that there are many $\alpha$--Hölder maps from $\R^n$ to $\H$ for $\alpha$ arbitrarily close to $\frac{2}{3}$, including maps that are topologically nontrivial (e.g., proper and degree--$1$) and maps that approximate arbitrary continuous functions. Our constructions build on techniques developed in \cite{WY-LipHom} and \cite{LWY16}.
Results like this were first suggested by unpublished work of Piotr Hajłasz, Jake Mirra, and Armin Schikorra, who explored constructing Hölder maps by numerical methods and found results pointing to the possible existence of nontrivial surfaces in $\H$ with Hölder exponent larger than $\frac{1}{2}$ \cite{MirraComm}.

Our first result provides H\"older extensions of maps from subsets of $\R^2$ to general Carnot groups equipped with a Carnot metric. In order to state our theorem we recall the following definition. A pair $(X,Y)$ of metric spaces $X$ and $Y$ is said to have the $\alpha$--H\"older extension property, $0<\alpha\leq 1$, if there exists $L\geq 1$ such that for every subset $Z\subset X$ every $(\lambda, \alpha)$--H\"older map $\varphi\colon Z\to Y$ has an $(L\lambda, \alpha)$--H\"older extension $\overline{\varphi}\colon X\to Y$.

\bt\label{thm:Hoelder-ext-Carnot-Dehn-step}
 Let $G$ be a Carnot group of step $k$, endowed with a Carnot metric $d_c$. Then the pair $(\R^2, (G, d_c))$ has the $\alpha$--H\"older extension property for every $\alpha<\frac{2}{k+1}$.
\et

In particular, given a closed Lipschitz curve $\gamma\from S^1\to \H$, we can extend $\gamma$ to a $\alpha$--Hölder map of a disc for any $\alpha<\frac{2}{3}$. Our construction produces a disc which is not even locally an embedding, even if $\gamma$ is an embedding, and it is an open question (see \cite[0.5.D]{Gro-CC-96}) whether there are $\alpha$--Hölder embeddings from $\R^2$ to $\H$ for $\frac{1}{2}<\alpha<\frac{2}{3}$.

We can extend the construction used in the theorem above to produce H\"older maps from $3$--dimensional Riemannian manifolds to \(\H\). Let $d_R$ be the distance coming from a left-invariant Riemannian metric on $\H$ and let $d_c$ be the associated Carnot metric. Let $\H_\Z$ be the integer lattice in $\H$.

\begin{thm}\label{thm:3d equivariant}
  For any $\alpha<\frac{2}{3}$ there is a locally $\alpha$--H\"older map $g\from (\H,d_R) \to (\H,d_c)$ which is $\Hz$--equivariant (that is, $g(h\cdot x)=h\cdot g(x)$ for all $h\in \Hz$ and $x\in \H$) and equivariantly homotopic to the identity.
\end{thm}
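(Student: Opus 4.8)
The plan is to build $P$ cell by cell on a $\Hz$--periodic triangulation of $\H$, pushing the exponent of the obvious map — the identity, which is only locally $\tfrac12$--H\"older from $d_0$ to $d_c$ — up to $\alpha$ by a multiscale construction in which the image of each cell of the $j$-th subdivision is kept of $d_c$--diameter $\lesssim m^{-j\alpha}$. Concretely, work in exponential coordinates, identifying $(\H,d_0)$ with $\R^3$; then $d_0$ is bi-Lipschitz to the Euclidean metric on each unit ball, and every left translation $L_\gamma$, $\gamma\in\Hz$, is an affine unipotent map of $\R^3$ which is simultaneously a $d_0$-- and a $d_c$--isometry. Fix a $\Hz$--invariant simplicial complex $\cC_0$ of bounded geometry (e.g.\ the lift of a smooth triangulation of the closed $3$--manifold $\Hz\backslash\H$) and let $\cC_0\prec\cC_1\prec\cdots$ be its iterated $m$--fold subdivisions, so the cells of $\cC_j$ have diameter $\asymp m^{-j}$ and $\cC_j$ is $\Hz$--invariant. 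I will construct $P$ so that $\diam_{d_c}P(\sigma)\le Cm^{-j\alpha}$ for every $3$--cell $\sigma$ of every $\cC_j$; since the complexes have uniformly bounded geometry, this bound — uniform over $\H$ by cocompactness — forces $P$ to be locally $\alpha$--H\"older.

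Start with $P=\id$ on the (finitely many $\Hz$--orbits of) vertices of $\cC_0$, and suppose inductively that $P$ has been defined on the $2$--skeleton $\cC_j^{(2)}$ with the diameter bound on $2$--cells. For a $3$--cell $\sigma$ of $\cC_j$ the boundary data $P|_{\partial\sigma}\from\partial\sigma\to(\H,d_c)$ then has image of $d_c$--diameter $\lesssim m^{-j\alpha}$. Over each new edge of $\cC_{j+1}$ inside $\sigma$ use a $d_c$--geodesic between the images of its endpoints, and over each new $2$--cell apply Theorem~\ref{thm:Hoelder-ext-Carnot-Dehn-step}, which is available since $\H$ has step $k=2$ and $\alpha<\tfrac23=\tfrac{2}{k+1}$. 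The key point is that the extension constant $L$ there is \emph{fixed}: after rescaling $\sigma$ to unit size the boundary data has uniformly bounded H\"older constant, so the $2$--cells of $\cC_{j+1}$ inside $\sigma$ receive images of diameter $\lesssim m^{-(j+1)\alpha}$ with no accumulation of constants from scale to scale, which is exactly what allows the induction to run through the infinitely many scales.

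What remains — and is the heart of the matter — is to extend the map already defined on $\partial\tau\cong S^2$ over each $3$--cell $\tau$ of $\cC_{j+1}$ inside $\sigma$, keeping $\diam_{d_c}P(\tau)\lesssim m^{-(j+1)\alpha}$. There is no topological obstruction, since $\H$ is homeomorphic to $\R^3$ and so $\pi_2(\H)=0$; the difficulty is purely metric and amounts to a quantitative H\"older filling estimate for $2$--spheres in $(\H,d_c)$, namely that an $(\lambda,\alpha)$--H\"older map $\partial[0,1]^3\to(\H,d_c)$ extends to an $(\lesssim\lambda,\alpha)$--H\"older map on $[0,1]^3$. I expect to establish this in the spirit of Theorem~\ref{thm:Hoelder-ext-Carnot-Dehn-step}: because $\alpha<\tfrac23$ one has $\tfrac2\alpha>3$ and $\tfrac3\alpha>4$, leaving room in the dimension count for an $\alpha$--H\"older image of a cube to carry the surface-- and bulk--type Hausdorff mass needed to fill $2$--spheres in $\H$; concretely, one replaces the expensive vertical displacements of a naive cone by horizontal detours organized on a Whitney decomposition of the open $3$--cell, controlled layer by layer by applying the two--dimensional extension theorem on slices together with an interpolation between adjacent slices, all with uniform constants. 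This filling step is where essentially all of the work lies, and it is the step I expect to be hardest.

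Finally, making every choice (geodesics, planar extensions, fillings) on a set of orbit representatives of cells under $\Hz$ and propagating it by the $L_\gamma$ produces a $\Hz$--equivariant $P$ with uniform local H\"older constant. Both $P$ and $\id$ are $\Hz$--equivariant continuous self-maps of $\H$, and in $\R^3$--coordinates the straight--line homotopy $H_t=(1-t)\,\id+t\,P$ is again $\Hz$--equivariant, since each $L_\gamma$ is affine and hence commutes with affine combinations; thus $P$ is equivariantly homotopic to the identity, completing the proof.
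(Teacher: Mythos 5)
Your proposal takes a genuinely different route from the paper's, and it contains a genuine gap. You propose to build $P$ by a multiscale cellular induction, extending over new edges by $d_c$--geodesics, over new $2$--cells by Theorem~\ref{thm:Hoelder-ext-Carnot-Dehn-step}, and over each new $3$--cell by a claimed quantitative H\"older filling estimate for $2$--spheres in $(\H,d_c)$: that an $(\lambda,\alpha)$--H\"older map $S^2\to (\H,d_c)$ with $\alpha<\tfrac23$ extends to a $(\lesssim\lambda,\alpha)$--H\"older map on $D^3$. You identify this filling estimate as ``where essentially all of the work lies,'' and that is exactly right --- but it is also precisely the step you do not prove. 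The one-paragraph sketch (Whitney decomposition, slicing, interpolation between adjacent slices) is too vague to carry the weight: the $2$--dimensional extension of Theorem~\ref{thm:Hoelder-ext-Carnot-Dehn-step} is built from non-canonical coarse triangulations, so there is no evident way to vary the slice fillings continuously, let alone H\"olderly, from slice to slice with uniform constants, and the higher-dimensional analogue of the coarse Dehn function bound that drives the $2$--dimensional theorem (subdividing a $2$--sphere into controllably many small $2$--spheres with a controlled mesh) is not supplied.

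The paper sidesteps this difficulty entirely and, in particular, does not use Theorem~\ref{thm:Hoelder-ext-Carnot-Dehn-step} in the proof of Theorem~\ref{thm:3d equivariant}. Rather than filling spheres, it constructs (Proposition~\ref{prop:admissible Q}) a single $\Hz$--equivariant, Lipschitz admissible map $Q\from X\to X_1$ which fixes the horizontal $1$--skeleton pointwise and whose uncollapsed balls in every $2$-- and $3$--cell are prescribed to be round, $2$--separated, of a fixed radius $\rho$, and mapped by similarities (Lemmas~\ref{lem:admissible approximations}--\ref{lem:connected sums} supply the admissible-map surgery needed to arrange this). The map $P$ is then the limit of the rescaled compositions $P_i=Q_{i-1}\circ\dots\circ Q_0$, and the H\"older estimate is extracted a posteriori from the self-similar nested structure of the uncollapsed balls: the diameter bound of Lemma~\ref{lem:diam of uncollapsed}, the Lipschitz bound on $M_{\sigma,i}$ of Lemma~\ref{lem:lip on Msigma}, the $2$--dimensional H\"older bound of Lemma~\ref{lem:holder dim 2}, and the collar estimate of Lemma~\ref{lem:Holder collar} feed into the small-scale estimate of Lemma~\ref{lem:small scale holder}, from which the global bound follows. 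In short, the paper builds the $3$--cell fillings into the combinatorics of $Q$ rather than proving a general H\"older extension theorem from $S^2$ to $D^3$; your approach, to be complete, would require exactly the kind of higher-dimensional filling result that the paper's construction is designed to avoid having to prove.
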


As a consequence, we obtain the following approximation result.
\begin{thm}\label{thm:approximations}
  Let $Y$ be a compact metric space and let $0<\alpha < \frac{2}{3}$.  Any continuous map $\varphi \from Y \to \H$ can be approximated uniformly by $\alpha$-H\"older maps.  
\end{thm}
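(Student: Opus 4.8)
The plan is to compose $\varphi$ with a suitably rescaled copy of the map $P$ from Theorem~\ref{thm:3d equivariant}, after first replacing $\varphi$ by a Lipschitz approximation into the Riemannian Heisenberg group $(\H,d_0)$. For the first reduction, identify $\H$ with $\R^3$ via exponential coordinates, so that $\varphi$ becomes a continuous map $Y\to\R^3$ with bounded image. Since $Y$ is a compact metric space, the real-valued Lipschitz functions on $Y$ form a subalgebra of $C(Y)$ that separates points and contains the constants, so by the Stone--Weierstrass theorem they are uniformly dense; applying this coordinatewise gives, for any $\epsilon>0$, a Lipschitz map $\psi\from Y\to\R^3$ with $\sup_y\|\psi(y)-\varphi(y)\|<\epsilon$. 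As $\varphi(Y)$ is bounded and $\epsilon\le 1$, the image $\psi(Y)$ lies in a fixed bounded set $B$ on which $d_0$ is bi-Lipschitz to the Euclidean metric; hence $\psi\from Y\to(\H,d_0)$ is $L$-Lipschitz and $d_0$-close to $\varphi$, and since $\id\from(\H,d_0)\to(\H,d_c)$ is uniformly continuous on $B$, the map $\psi$ is also $d_c$-close to $\varphi$, with the $d_c$-closeness tending to $0$ as $\epsilon\to0$.

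Next I would rescale $P$. Let $\delta_r$ denote the Heisenberg dilations, which scale $d_c$ by the factor $r$ and are $r^{-2}$-Lipschitz from $(\H,d_0)$ to itself for $r\le 1$ (compute in the left-invariant orthonormal frame, where $\delta_{r*}$ has weights $r,r,r^2$). Set $P_r:=\delta_r\circ P\circ\delta_{1/r}$ for $r\in(0,1]$. Two facts should be checked. First, $P_r$ displaces points only slightly: $\Hz$-equivariance of $P$ makes $x\mapsto d_c(P(x),x)$ an $\Hz$-invariant continuous function, hence one that descends to the compact quotient $\Hz\backslash\H$ and is bounded by some $C_0$; therefore $d_c(P_r(x),x)=r\,d_c(P(\delta_{1/r}x),\delta_{1/r}x)\le rC_0$ for every $x$. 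Second, $P_r$ is $\alpha$-Hölder on bounded sets, quantitatively: equivariance upgrades the local $\alpha$-Hölder property of $P$ to a position- and scale-uniform estimate $d_c(P(a),P(b))\le\lambda\,d_0(a,b)^\alpha$ whenever $d_0(a,b)\le 1$ (cover $\H$ by $\Hz$-translates of a compact fundamental domain and use left-invariance of $d_0$ and $d_c$), so that $d_0(a,b)\le r^2$ forces $d_0(\delta_{1/r}a,\delta_{1/r}b)\le 1$ and hence $d_c(P_r(a),P_r(b))\le\lambda r^{1-2\alpha}d_0(a,b)^\alpha$; a chaining argument over $B$ then yields a finite constant $\Lambda_r$ with $P_r|_B$ being $(\Lambda_r,\alpha)$-Hölder.

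Now I would conclude. Given $\epsilon>0$, first choose $r\in(0,1]$ with $rC_0<\epsilon/2$, and then choose $\psi$ as above close enough to $\varphi$ that $d_c(\psi(y),\varphi(y))<\epsilon/2$ for all $y$. The composition $P_r\circ\psi\from Y\to(\H,d_c)$ satisfies $d_c(P_r\psi(y),P_r\psi(y'))\le\Lambda_r d_0(\psi(y),\psi(y'))^\alpha\le\Lambda_r L^\alpha d_Y(y,y')^\alpha$, so it is $\alpha$-Hölder, and $d_c(P_r\psi(y),\varphi(y))\le d_c(P_r\psi(y),\psi(y))+d_c(\psi(y),\varphi(y))\le rC_0+\epsilon/2<\epsilon$ for every $y\in Y$. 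Letting $\epsilon\to0$ gives the required uniform approximation.

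The one genuinely delicate step is the rescaling: a raw composition $P\circ\psi$ is already $\alpha$-Hölder, but need not be close to $\varphi$ because $P$ displaces points by a bounded yet not arbitrarily small amount, and the remedy of conjugating $P$ by dilations only works if the $\alpha$-Hölder property survives conjugation — which is exactly where the $\Hz$-equivariance of $P$ (providing a Hölder constant uniform in position and independent of scale) is essential, and where the estimate interacts with the $r^{-2}$ expansion of $\delta_{1/r}$. The homotopy-to-the-identity half of Theorem~\ref{thm:3d equivariant} is not needed for this statement.
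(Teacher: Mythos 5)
Your proposal is correct and follows essentially the same route as the paper: approximate $\varphi$ by a Lipschitz map into $(\H,d_0)$, then post-compose with a dilation conjugate $\delta_r\circ P\circ\delta_{1/r}$ of the equivariant map from Theorem~\ref{thm:3d equivariant}, choosing $r$ small so that the $\Hz$-periodicity bound on $d_c(x,P(x))$ makes the rescaled map uniformly close to the identity. (One small slip: it is $\delta_{1/r}$, not $\delta_r$, that is $r^{-2}$-Lipschitz on $(\H,d_0)$ for $r\le 1$; your subsequent use of this fact is correct.)
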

This exponent is sharp; when $\frac{2}{3}<\alpha\le 1$, \cite{Zus15} implies that any $\alpha$-H\"older map $\psi\from D^2 \to \H$ factors through a metric tree, so $\psi(\partial D^2)$ has filling radius zero.  A curve with nonzero filling radius cannot be uniformly approximated by curves with filling radius zero, so if $\varphi\from D^2\to \H$ is a continuous map such that $\varphi(\partial D^2)$ is a simple closed curve, then $\varphi$ cannot be uniformly approximated by $\alpha$--H\"older maps when $\alpha>\frac{2}{3}$.

The map constructed in Theorem~\ref{thm:3d equivariant} is self-similar, and by taking a tangent cone at a carefully-chosen point, we furthermore obtain the following result. Recall that a continuous map between metric spaces is called proper if preimages of compact sets are compact. A Euclidean similarity is a composition of a scaling, translation, and rotation/reflection. A Heisenberg similarity is a scaling composed with a left-trans\-lation.

\begin{thm}\label{thm:3d tangent cone}
  For any $\epsilon>0$, there is an $0<\epsilon'<\epsilon$ such that there is a globally $(\frac{2}{3}-\epsilon')$--H\"older map $F\from \R^3\to (\H,d_c)$ which is proper and of degree $1$.  This map is self-similar in the sense that there is a Euclidean similarity $h\from \R^3\to \R^3$ and a Heisenberg similarity $m\from \H\to \H$, both with scaling factors bigger than $1$, such that $m(F(h^{-1}(x)))=F(x)$ for all $x\in \R^3$.  
\end{thm}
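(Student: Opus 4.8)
\begin{sketch}
The plan is to obtain $F$ as a blow-up (tangent map) of the map $P\from(\H,d_0)\to(\H,d_c)$ furnished by Theorem~\ref{thm:3d equivariant}, taken at a point built into its construction. I would first record that $P$ is itself proper and of degree $1$: being $\Hz$--equivariant it descends to a self-map $\bar P$ of the compact nilmanifold $M=\Hz\backslash\H$, and since the equivariant homotopy to the identity descends as well, $\bar P\simeq\id_M$, so $\deg\bar P=1$; the homotopy also shows $d_0(P(x),x)$ is bounded, hence $P$ is at bounded distance from $\id_\H$ and is therefore proper, and a fundamental-domain argument promotes $\deg\bar P=1$ to $\deg P=1$.

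Next I would isolate the self-similar structure of $P$. The construction underlying Theorem~\ref{thm:3d equivariant} proceeds scale by scale along a geometric sequence $\lambda^{-j}$ and has a distinguished fixed point $p$ near which $P$ exactly intertwines a contracting local self-similarity $\Phi$ of the domain (with differential at $p$ a Euclidean homothety of ratio $\lambda^{-1}$) with the contracting Heisenberg dilation $\delta^c_{\mu^{-1}}$ of the target, where $\mu=\lambda^{\alpha}$ and $\alpha=\tfrac23-\epsilon'$ for a fixed $\epsilon'\in(0,\epsilon)$. Passing to the tangent cone of $(\H,d_0)$ at $p$ replaces the domain by Euclidean $\R^3$ and $\Phi$ by its differential, an honest Euclidean homothety $h^{-1}$ of ratio $\lambda^{-1}$; it leaves the target $(\H,d_c)$ unchanged, with $\delta^c_{\mu^{-1}}$ becoming $m^{-1}$ where $m=\delta^c_\mu$ has ratio $\mu>1$. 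This produces a map $F$ defined on a neighbourhood $B$ of $0\in\R^3$ which is $(C,\alpha)$--H\"older there, because $d_0$ is $(1+o(1))$--bi-Lipschitz to its tangent metric near $p$ and $P$ is locally $\alpha$--H\"older. One then extends $F$ to all of $\R^3$ by setting $F:=m^{n}\circ F\circ h^{-n}$ on $h^{n}(B)$: the intertwining relation makes these agree on overlaps, gives $m\circ F\circ h^{-1}=F$, and, because $\mu=\lambda^{\alpha}$, is precisely the scaling under which the $\alpha$--H\"older constant is preserved. Hence $F$ is globally $(\tfrac23-\epsilon')$--H\"older and self-similar with the required expanding factors $\lambda,\mu>1$.

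Finally I would check that $F$ is proper and of degree $1$; this is the heart of the matter. By self-similarity it suffices to produce a compact $W\ni 0$ and a bounded open $W'\ni F(0)$ with $h(W)\supset W$, $m(W')\supset W'$, $F(W)\subset\overline{W'}$, and $F^{-1}(W')\subset\inter W$, such that $F\from(W,\partial W)\to(\overline{W'},\partial W')$ has degree $1$. Then $\R^3=\bigcup_n h^{n}(W)$ and $\H=\bigcup_n m^{n}(W')$ exhaust by compacta, $F^{-1}(m^{n}(W'))=h^{n}(F^{-1}(W'))\subset h^{n}(\inter W)$ gives properness, and $F|_{h^{n}(W)}$ is the conjugate of $F|_W$ by the orientation-preserving homeomorphisms $h^{n}$, $m^{n}$, hence has degree $1$, so $\deg F=1$. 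Via the tangent cone this local picture for $F$ at $0$ is exactly the local picture of $P$ near $p$, so the task reduces to arranging the construction of $P$ so that its self-similar fixed point $p$ is also a point at which $P$ is, locally, a proper degree--$1$ map onto a neighbourhood -- for instance a point of unit local degree around which $P$ is injective. That such a point is available, and that this good local structure survives both the limit defining $P$ and the blow-up, is where I expect the real work to lie; the global H\"older estimate and the self-similarity identity are comparatively formal once the right $p$ is fixed. An alternative route to properness and degree $1$ would be to run the same scaling on the explicit finite-stage maps $P_j\to P$, which are degree--$1$ and, by self-similarity, uniformly proper, and to pass degree and properness to the limit $F$.
\end{sketch}
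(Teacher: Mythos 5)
Your proposal captures the correct global architecture of the paper's argument: $P$ has an exact self-similar structure near a carefully chosen point, and the map $F$ is obtained by ``unrolling'' this self-similarity to all of $\R^3$. You also correctly locate the crux of the matter in the sentence beginning ``That such a point is available, and that this good local structure survives\dots is where I expect the real work to lie.'' That acknowledged gap is, however, a genuine one: you defer precisely the steps on which the properness and degree computations for $F$ hinge, and the paper has to do nontrivial work to close it.

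Concretely, the paper needs two ingredients that your sketch does not produce.  First, the map $Q$ of Proposition~\ref{prop:admissible Q} must be \emph{upgraded} (Lemma~\ref{lem:admissible Q degree}, proved via a Hurewicz-type filling result, Lemma~\ref{lem:hurewicz fillings}) so that if a scaled $3$--cell $\sigma_1\subset\sigma$ is at distance $>\mu 2^{-n}$ from $\partial\sigma$, then $Q^{-1}(\sigma_1)\subset\inter\sigma$ and there is a \emph{unique} uncollapsed ball $B_1$ of $q_\sigma$ with $q_\sigma(B_1)=\sigma_1$. Without this uniqueness there is no canonical Euclidean similarity $h$ conjugating the construction to itself at the next scale, and one cannot even define the candidate fixed point $x_0=\bigcap h^{-i}(D^3)$ so that $F$ is well-defined and self-similar. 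This is not something one can ``arrange so that $P$ is injective near $p$'': the map $P$ is nowhere locally injective, and the paper never makes it so; what it controls is the local degree via the filling lemma. Second, the properness of $F$ (and hence, via compact fibers, the degree computation) rests on the displacement estimate $P(\delta)\subset N_{4\mu 2^{-n}}(\delta;d_c)$ for cells $\delta\in\cF(X)$ (Lemma~\ref{lem:P complement sigma}). This is what shows $F(B_0\setminus B_1)$ stays a definite distance $\ge 2\mu 2^{-2n}$ away from $y_0:=F(x_0)$, which propagates by self-similarity to $F(x)\to\infty$ as $x\to\infty$. Your outline with nested $W$, $W'$ is of the right shape, but the verification of $F^{-1}(W')\subset\inter W$ requires exactly this displacement estimate, which you do not supply.

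Two smaller points. The Heisenberg similarity $m$ in the paper is \emph{not} a pure dilation: it is $m(y)=g^{-1}\cdot s^n(y)$ for some $g\in\Hz$, because the scaled cell $\sigma_1$ lying deep inside $\sigma$ is a copy $s^{-n}(g\sigma)$ and generally $g\ne e$. (This is a cosmetic difference -- one could conjugate by a translation taking $y_0$ to the origin -- but worth being precise about.) And the ``tangent cone of $(\H,d_0)$'' framing is an unnecessary detour: because Proposition~\ref{prop:admissible Q}(\ref{it:admissible Q:recurrence}) guarantees the uncollapsed balls are round and the conjugating maps $h$ are honest Euclidean similarities of $D^3\subset\R^3$, the self-similarity of $p_\sigma=P\circ r_\sigma$ is exact rather than infinitesimal, and $F$ is defined directly on $\R^3=\bigcup_i h^{-i}(D^3)$ without any blow-up limit. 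Passing to a tangent cone of a H\"older map would in fact require additional uniformity you have not established. Your preliminary observation that $P$ itself is proper and degree $1$ (via the nilmanifold $\Hz\backslash\H$) is correct and is used by the paper in a slightly different form (the final degree computation cites $\deg_{y_0}(P)=1$ via the equivariant homotopy to the identity), but it does not by itself transfer to $F$ without the fixed-point and displacement structure above.
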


Gromov showed \cite[3.1.A]{Gro-CC-96} that $\alpha$--Hölder maps from $\R^3$ to $\H$ must have local degree zero when $\alpha>\frac{2}{3}$ and asked whether this exponent can be improved; this construction shows that Gromov's result is sharp.

We now give an outline of the proofs of our results.  In order to prove Theorem~\ref{thm:Hoelder-ext-Carnot-Dehn-step} it is enough to show that there exists $L\geq 1$ such that every $\lambda$-Lipschitz curve $\gamma\colon S^1\to (G, d_c)$ admits an $(L\lambda, \alpha)$--H\"older extension $f\colon D^2\to (G, d_c)$ to the $2$--dimensional unit ball $D^2$ (see \cite[Theorem 6.4]{LWY16}).

We construct such an extension using methods based on the H\"older extension results in \cite{LWY16}.  The main ingredient is the so-called coarse Dehn function, also known as Gromov's mesh function, which is roughly defined as follows. Let $X$ be a geodesic metric space and $\varepsilon>0$. For $r>0$ the coarse Dehn function $\Ar_{X, \varepsilon}(r)$ is the smallest number such that any closed curve in $X$ of length at most $r$ can be subdivided into $\Ar_{X, \varepsilon}(r)$ closed curves of length at most $\varepsilon$. We refer to Section~\ref{sec:prelims} for a precise definition.  When $X$ is a Carnot group $G$ of step $s$, equipped with a Carnot metric $d_c$, it can be shown that
\begin{equation}\label{eq:intro-bound-Ar}
\Ar_{(G, d_c), \frac{r}{n}}(r) \leq C\cdot n^{s+1}
\end{equation}
(see Lemma~\ref{lem:bound-coarse-Dehn-Carnot-step}).

Fix a closed Lipschitz curve $\gamma$ in $(G, d_c)$ of length $r$. Using the bound \eqref{eq:intro-bound-Ar} we can construct nested subdivisions of $\gamma$ as follows. Let $n\in\N$ be sufficiently large, only depending on $\alpha$ and $C$, and set $N=C n^{s+1}$. Then there exist closed curves \(\gamma^1_{1},\dots, \gamma^1_{N}\) of length \(n^{-1} r\) that subdivide $\gamma$.  Each curve \(\gamma^1_j\) can then be subdivided into curves \(\gamma^2_{(j-1)N+1},\dots, \gamma^2_{jN}\) of length \(n^{-2} r\), and so on.  We will construct the H\"older extension \(f\) of $\gamma$ so that its image is the closure of the union $\bigcup_{i,j} \gamma^i_j$.  

We start by constructing a family of nested discs in $D^2$. Let $B^0_1=D^2$ and let $B^1_1,\dots, B^1_N\subset B^0_1$ be disjoint discs of equal radius $\rho$, where $\rho\approx N^{-\frac{1}{2}}$. 
We repeat the process on each disc; for each $i\ge 0$ and $j=1,\dots, N^i$, we choose $N$ disjoint discs of radius $\rho^{i+1}$ inside $B^i_j$ and label them $B^{i+1}_{(j-1)N+1},\dots, B^{i+1}_{jN}$. Let $M_i=D^2\setminus \bigcup_j \inter B^i_j$, so that 
$$S^1=M_0\subset M_1\subset M_2\subset \dots \subset D^2$$
is an increasing sequence of subsets and $\bigcup_i M_i$ is the complement of a Cantor set $K$.

Next, we define \(f\) on the boundaries of the discs so that \(f(\partial D^2)=\gamma\) and \(f(\partial B^i_j)=\gamma^i_j\) for every \(i\) and \(j\).
We extend $f$ to the rest of $D^2$ by noting that the complement \(D^2\setminus \bigcup_{i,j} \partial B^i_j\) consists of the Cantor set \(K=\bigcap(D^2\setminus M_i)\) and infinitely many connected components that are each homeomorphic to a genus $0$ surface with \(N+1\) boundary components.  Let 
$$S^{i}_{j}=B^{i}_{j}\setminus \bigcup_{m=(j-1)N+1}^{jN} \inter B^{i+1}_m$$
be one such component.
Then $f$ sends the outer boundary $\partial B^i_{j}$ to $\gamma^i_j$ and the $N$ inner boundary components to $N$ curves $\gamma^{i+1}_m$ that subdivide $\gamma^i_j$. 
Consequently, we can extend \(f\) over \(S^i_j\) so that
$$f(S^i_j)\subset \bigcup_{m=(j-1) N+1}^{jN} \gamma^{i+1}_m$$
and \(f\) is Lipschitz on \(S^i_j\).
This defines \(f\) on \(D^2\setminus K\).  
If we construct the extensions to the $S^i_j$'s carefully, we can ensure that \(f\) is H\"older on $D^2\setminus K$ and extend \(f\) continuously to \(K\)  to obtain the desired map.

Note that \(f\) is far from injective.  In fact, for any neighborhood \(U\) of \(K\), the image \(f(D^2\setminus U)\) has Hausdorff and topological dimension 1; actually, the restriction \(f|_{D^2\setminus U}\) factors through a graph.

\medskip 

This construction uses two main ideas: First, we can reduce the problem of constructing a Hölder extension of a closed curve $\gamma$ to the subproblem of constructing Hölder extensions of each curve in a subdivision of $\gamma$. That is, we can extend $\gamma$ by subdividing $\gamma$ into $\gamma_1,\dots,\gamma_N$, constructing a map from an $N$--holed disc $M$ to $G$ that sends the outer boundary of $M$ to $\gamma$ and the boundaries of the holes to the $\gamma_i$'s, and constructing Hölder extensions of the $\gamma_i$'s.
Second, we don't need to actually solve any of the subproblems. As long as we can split the original problem into smaller and smaller subproblems, we can pass to a limit where all of them disappear. 

We use a version of these ideas to construct the map from $(\H,d_R)$ to $(\H,d_c)$ in Theorem~\ref{thm:3d equivariant}, but the extra dimension adds some complications. To simplify matters, consider constructing a map $g$ from the Euclidean ball $D^3$ to $(\H,d_c)$. 
As in the two-dimensional case, one can construct a sequence of nested sets \(M_1\subset M_2\subset \dots\subset D^3\),  where \(M_i\) consists of $D^3$ with \(k_i\) balls \(B^i_1,\dots, B^i_{k_i}\) removed.
One could attempt to construct a Hölder map by following the outline of Theorem~\ref{thm:Hoelder-ext-Carnot-Dehn-step}. That is, each ball \(B^i_j\) contains some collection of smaller balls \(C_{1},\dots, C_{m}\). If $g$ is already defined on the outer sphere $\partial B^i_j$, one can extend it to $B^i_j\setminus \bigcup_k C_k$ so that it sends the inner spheres to a subdivision $g(\partial C_1),\dots, g(\partial C_m)$ of $g(\partial B^i_j)$.
The images \(g(\partial M_i)\) then form a sequence of finer and finer subdivisions of \(g(\partial D^3)\), and we can extend the map to all of $D^3$ by passing to a limit.

The main difficulty with this outline is that the spheres and their subdivisions need to be parametrized by Hölder maps. These are more difficult to construct than the horizontal subdivisions of curves that we used in the two-dimensional case.
That is, in the two-dimensional case, we reduced from a horizontal curve $\gamma$ to its horizontal subdivision $\gamma_1,\dots, \gamma_N$ by constructing a map from a $N$--holed disc $M$ to the horizontal graph $\bigcup_i \gamma_i$ that sends the outer boundary to $\gamma$ and the inner boundaries to the $\gamma_i$. Since the edges of the graph are horizontal, the map on $M$ can be taken to be Lipschitz and thus Hölder.
Suppose instead that $S$ is a sphere in $\H$, subdivided into spheres $S_1,\dots, S_N$. Then there are continuous maps from the $N$--holed ball $M$ to $\bigcup S_i$ that send the outer boundary to $S$ and the inner boundaries to the $S_i$'s, but those maps may not be Hölder, even if all of the $S_i$'s are images of Hölder spheres.

This is difficult to solve directly, so in Sections~\ref{sec:cellulation-and-Q}--\ref{sec:holder-bounds}, we develop a different approach to constructing Hölder maps. We give a brief sketch.
We start by choosing a $0<s<1$ and constructing a sequence of simplicial complexes $X_i$. We scale the metric on  $X_i$ so that each simplex is a regular simplex with sides of length $s^i$ and ask that the $X_i$ approximate $(\H,d_c)$ more and more closely in the sense that there are $C>1$ and bilipschitz homeomorphisms $\iota_i\from X_i\to (\H,d_R)$ that satisfy
$$C^{-1} d_{X_i}(x,y) - Cs^{i} \le d_{c}(\iota_i(x),\iota_i(y))\le C d_{X_i}(x,y) + Cs^{i}$$
for all $x,y\in X_i$.

We construct a map $P\from X_0\to \H$ by composing a sequence of Lipschitz cellular maps $Q_i\from X_i\to X_{i+1}$ that are \emph{admissible maps}.
An admissible map is a cellular map such that for each $d>0$ and each $d$--cell $\delta\in \cF^d(X_i)$, there is a collection of closed balls $B^\delta_1,\dots, B^\delta_{l_\delta}\subset \delta$ with disjoint interiors such that $Q_i$ sends each $B^\delta_j$ homeomorphically to a $d$--cell of $X_{i+1}$ and collapses $\delta\setminus \bigcup_j B^\delta_j$ to a lower-dimensional set, i.e.,
$$Q_i(\delta\setminus \bigcup_j B^\delta_j)\subset X_{i+1}^{(d-1)},$$ where $X_{i+1}^{(d-1)}$ denotes the $(d-1)$--skeleton of $X_{i+1}$.
We call the $B^\delta_i$'s \emph{uncollapsed balls}, and we construct $Q_i$ so that the $B^\delta_i$'s are pairwise disjoint, as in Figure~\ref{fig:admissible}. 

\begin{figure}
\begin{center}
\def\svgwidth{\textwidth}
\begingroup%
  \makeatletter%
  \providecommand\color[2][]{%
    \errmessage{(Inkscape) Color is used for the text in Inkscape, but the package 'color.sty' is not loaded}%
    \renewcommand\color[2][]{}%
  }%
  \providecommand\transparent[1]{%
    \errmessage{(Inkscape) Transparency is used (non-zero) for the text in Inkscape, but the package 'transparent.sty' is not loaded}%
    \renewcommand\transparent[1]{}%
  }%
  \providecommand\rotatebox[2]{#2}%
  \newcommand*\fsize{\dimexpr\f@size pt\relax}%
  \newcommand*\lineheight[1]{\fontsize{\fsize}{#1\fsize}\selectfont}%
  \ifx\svgwidth\undefined%
    \setlength{\unitlength}{390.59441757bp}%
    \ifx\svgscale\undefined%
      \relax%
    \else%
      \setlength{\unitlength}{\unitlength * \real{\svgscale}}%
    \fi%
  \else%
    \setlength{\unitlength}{\svgwidth}%
  \fi%
  \global\let\svgwidth\undefined%
  \global\let\svgscale\undefined%
  \makeatother%
  \begin{picture}(1,0.30047332)%
    \lineheight{1}%
    \setlength\tabcolsep{0pt}%
    \put(0,0){\includegraphics[width=\unitlength,page=1]{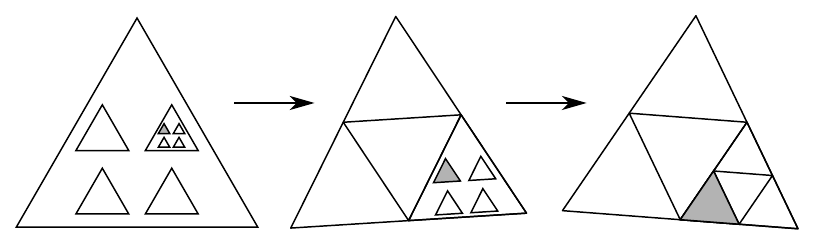}}%
    \put(0.33674546,0.21048155){\color[rgb]{0,0,0}\makebox(0,0)[t]{\lineheight{1.25}\smash{\begin{tabular}[t]{c}$Q_0$\end{tabular}}}}%
    \put(0.67110409,0.21048155){\color[rgb]{0,0,0}\makebox(0,0)[t]{\lineheight{1.25}\smash{\begin{tabular}[t]{c}$Q_1$\end{tabular}}}}%
  \end{picture}%
\endgroup%

\end{center}
\caption{\label{fig:admissible} The maps $P_i$ and $Q_i$ send uncollapsed balls (small triangles) to cells of $X_{i+1}$ by similarities and collapse the region outside the uncollapsed balls to lower-dimensional skeleta. The four mid-sized triangles on the left are the uncollapsed balls of $P_1=Q_0$ and the small triangles are uncollapsed balls of  $P_2=Q_1\circ Q_0$. Note that $P_i$ collapses more and more of $X_0$ to the $1$--skeleton of $X_i$ as $i$ increases.}
\end{figure}

Let
$$P_i=Q_{i-1}\circ \dots \circ Q_0\from X_0\to X_i.$$
If the $Q_i$'s have small displacement in the sense that $d_c(\iota_{i+1}(Q_i(x)), \iota_i(x))\lesssim s^i$, then $\iota_i\circ P_i$ converges uniformly to a locally Hölder map $P\from X_0 \to \H$; indeed, $d_c(\iota_i\circ P_i,P)\lesssim s^i$. Since $\iota_0$ is a bilipschitz homeomorphism from $X_0\to (\H,d_R)$, the map  $g=P\circ \iota_0^{-1}$ will also be locally Hölder.

In fact, this construction results in a map based on nested subdivisions of curves and spheres, like the one outlined above. As $i$ increases, $P_i$ sends larger and larger pieces of $X_0$ to the $1$--skeleton of $X_i$. 
That is, for any $d$, let $M^{(d)}_i=P_i^{-1}(X_i^{(d)})$ be the preimage of the $d$--skeleton of $X_i$.
The cellularity of $Q_i$ implies that $M^{(1)}_{i}\subset M^{(1)}_{i+1}$ for all $i$. 
Let $\delta$ be a $2$--cell of $X_0$ and let $M_{\delta,i}:=\delta\cap M^{(1)}_i$. 
This is the complement of the uncollapsed discs of $P_i$. 
Each uncollapsed disc of $P_{i+1}$ is contained in an uncollapsed disc of $P_{i}$ (see Figure~\ref{fig:admissible}), so $\bigcup_i M^{(1)}_{i}$ contains all of $\delta$ except for a Cantor set.

Let $\gamma:=P(\partial \delta)$; when $s$ is sufficiently small, $\gamma$ will be a $(1-\epsilon)$--Hölder curve in $\H$ with diameter roughly $1$. Such curves are generally not differentiable, so they do not have horizontal velocities, but several authors have noted that they satisfy an integral version of the horizontality condition, see for instance \cite[Lemma~3.1]{LDZ13}.
Let $M_{\delta,i}:=\delta\cap M^{(1)}_i$ be the complement of the uncollapsed discs of $P_i$. 
Then $P_i(M_{\delta,i})\subset X_i^{(1)}$, and there is an $N$ such that $M_{\delta,i}$ is an $N$--holed disc. The outer boundary of $M_{\delta,i}$ is $\partial \delta$, so the images of the inner boundaries of $M_{\delta,i}$ subdivide $\gamma$ into closed curves $\gamma_1,\dots, \gamma_N$, each of diameter roughly $s^{i}$.
As $i$ increases, these subdivisions grow finer and finer, and as in the proof of Theorem~\ref{thm:Hoelder-ext-Carnot-Dehn-step}, the restriction $P|_\delta$ is Hölder.

We see the same picture in the $3$--cells. Let $\Delta$ be a $3$--cell of $X_0$ and let $S:=P(\partial \Delta)$ be a Hölder sphere.
Let $M_{\Delta,i}:=\Delta\cap M^{(2)}_i$; as before, this is $\Delta$ minus some uncollapsed balls. Since $P_i(M_{\Delta,i})$ is $2$--dimensional, $P$ sends the outer boundary of $M_{\Delta,i}$ to $S$ and sends the inner boundaries of $M_{\Delta,i}$ to a subdivision of $S$ into Hölder spheres of diameter roughly $s^{i}$. As $i$ increases, the subdivisions grow finer and finer, so that ultimately $P|_\Delta$ is Hölder.

Though we can write $P$ in terms of subdivisions, our bound on the Hölder exponent takes a different approach. We sketch the bound here; for full details, see Section~\ref{sec:holder-bounds}. The main idea of the Hölder bound is that because $d_c(\iota_i\circ P_i,P)\lesssim s^i$, the Hölder exponent of $P$ is bounded by the growth rate of $\Lip(P_i)$, and we control $\Lip(P_i)$ by placing geometric conditions on the $Q_i$'s.

First, we require that $Q_i$ is Lipschitz on $X_i^{(1)}$ with Lipschitz constant independent of $s$. This ensures that $\Lip(P_i|_{X_0^{(1)}})$ does not grow too quickly. Second, we 
require that there is an $0<a<s$ such that for all $i$, the uncollapsed balls of $Q_i$ are disjoint regular simplices of radius $a s^i$ and that $Q_i$ sends each uncollapsed ball to a cell of $X_{i+1}$ by a similarity with scaling factor $\frac{s}{a}$. 

The Dehn function of the Heisenberg group puts an upper bound on the size of $a$. That is, if  $\delta$ is a $2$--cell of $X_i$, then $Q_i(\partial \delta)$ is a curve of length roughly $s^i$ in $X_{i+1}$, which is to say that $Q_i(\partial \delta)$ is a loop of roughly $s^{-1}$ edges. Filling such a loop with a disc typically requires on the order of $s^{-3}$ triangles, so $Q_i|_\delta$ might have roughly $s^{-3}$ uncollapsed balls. If these all have radius $a s^i$ and are contained in a ball of radius $s^i$, then $a\lesssim s^{\frac{3}{2}}$. In fact, we may choose $Q_i$ so that $a= c s^{\frac{3}{2}}$ for some $c$ independent of $s$.

We claim that there is a $C$ such that 
$$\left(\frac{s}{a}\right)^i\le \Lip(P_i)\le C \left(\frac{s}{a}\right)^i.$$ 
The lower bound follows from considering uncollapsed balls. If $B$ is an uncollapsed ball of $P_i$, then for each $0\le j< i$, $P_j(B)$ is contained in an uncollapsed ball of $Q_j$. Since $Q_j$ scales $P_j(B)$ by $\frac{s}{a}$, the composition $P_i$ scales $B$ by a factor of $\left(\frac{s}{a}\right)^i$. 

The upper bound is more difficult, but the key idea is that for any $x\in X_0$, we may consider the sequence $D_i(x)$ of integers such that $D_i(x)$ is the dimension of the cell of $X_i$ whose interior contains $P_i(x)$. This sequence is non-increasing, and if $D_i(x)>1$ and $P_i(x)$ is not contained in an uncollapsed ball of $Q_i$, then $D_{i+1}(x)<D_i(x)$. Since $D_i(x)\le 3$, this can happen at most three times. That is, for all but at most three values of $i$, either $P_i(x)$ lies in an uncollapsed ball of $Q_i$ or $P_i(x)\in X_i^{(1)}$.
But $Q_i$ is $\frac{s}{a}$--Lipschitz on uncollapsed balls and uniformly Lipschitz on $X_i^{(1)}$, so 
$$\Lip(P_i)\le C \left(\frac{s}{a}\right)^i$$
for some $C$ depending on $\max_i \Lip(Q_i)$. 

We use this bound and the bound $d_c(\iota_i\circ P_i,P)\lesssim s^i$ to show that $P$ is locally $\frac{\log s}{\log a}$--Hölder, see Section~\ref{sec:holder-bounds}. Since $a\approx s^{\frac{3}{2}}$, $\frac{\log s}{\log a}$ approaches $\frac{2}{3}$ as $s\to 0$.

\subsection{Structure of paper} 
In Section~\ref{sec:prelims} we establish basic notation used throughout the text. We furthermore recall the precise definition of the coarse Dehn function $\Ar_{X, \varepsilon}$ and the definition of an admissible map between complexes. Finally, we collect some background on Carnot groups needed for the rest of the paper. In Section~\ref{sec:triang to holder} we deduce Theorem~\ref{thm:Hoelder-ext-Carnot-Dehn-step} from the extension results proved in \cite{LWY16}. 

In Section~\ref{sec:cellulation-and-Q} we construct the triangulations $X_i$ of $\H$ and the maps $Q_i\from X_i\to X_{i+1}$ used in the sketch. In practice, the $X_i$'s are all scalings of one triangulation $X_0$ and the $Q_i$'s are all scalings of one map $X_0\to X_1$. We compose scalings of $Q$ to produce a map $P$ from $X_0$ to $(\H, d_c)$. We give two constructions of $Q$; to prove Theorem~\ref{thm:3d equivariant}, we mainly need bounds on the Lipschitz constant of $Q$ and the volume of its cells, but to prove Theorem~\ref{thm:3d tangent cone}, we need some additional conditions, which are provided by Lemma~\ref{lem:quant-admissible} and Lemma~\ref{lem:normalizing uncollapsed}. These lemmas deal with approximating continuous maps to arbitrary simplicial complexes by admissible maps, and we defer their proofs to Appendix~\ref{app:admissible-maps}.

In Section~\ref{sec:holder-bounds} we show that the map $P$ satisfies the properties of Theorem~\ref{thm:3d equivariant} and prove Theorem~\ref{thm:approximations}. In Section~\ref{sec:Hoelder-R3-Hc} we analyze the tangent cone of $P$ and prove Theorem~\ref{thm:3d tangent cone}. 

\subsection{Acknowledgments}
 The authors would like to thank Assaf Naor for suggesting Theorem~\ref{thm:approximations} and to thank the referee for their hard work reading the original version of the paper and providing critical feedback.

\section{Preliminaries}\label{sec:prelims}

\subsection{Basic definitions and notation}

The Euclidean metric on $\R^d$ will be denoted by $|\cdot|$. For $x\in \R^d$ and $r>0$, we let $B(x,r):=\{x\in \R^d:|x|\leq r\}$ and let $D^d:=B(0,1)$ be the closed unit ball.

Let $(X, d_X)$ be a metric space. The length of a curve $c$ in $X$ is denoted by $\length_{d_X}(c)$ or simply by $\length(c)$. Let $(Z, d_Z)$ be another metric space.  A map $\varphi\colon Z\to X$ is called \emph{$(\lambda, \alpha)$--H\"older} if $$d_X(\varphi(z), \varphi(z'))\leq \lambda\cdot d_Z(z,z')^\alpha$$ for all $z,z'\in Z$.  We say $\varphi$ is \emph{$\alpha$--H\"older} if it is $(\lambda, \alpha)$--H\"older for some $\lambda>0$.

Let $U\subset \R^d$ be open. The (parametrized) volume of a Lipschitz map $\varphi\colon U\to X$ is defined by $$\vol^d(\varphi) = \vol^d(\varphi; X) = \int_X\#\{z\in U: \varphi(z)= x\}\,d\mathcal{H}^d(x),$$ where $\mathcal{H}^d$ denotes the $d$--dimensional Hausdorff measure on $X$. If $X$ is a Riemannian manifold or a CW complex with piecewise Riemannian metric then, by the area formula, $\vol^d(\varphi)$ agrees with the volume defined by integrating the jacobian of the derivative of $\varphi$.

\subsection{Admissible maps} \label{sec:complexes-and-maps}
Let $X$ be a simplicial complex.  For $d\geq 0$ we denote by $\cF^d(X)$ the set of closed $d$--simplices in $X$ and by $X^{(d)}$ the $d$--skeleton of $X$. Let $\cF(X)$ be the set of cells in $X$ of every dimension. 

We say that a continuous map $f\from D^n\to X^{(n)}$  is an \emph{admissible map} if there is a collection of closed subsets $B_1,\dots, B_k\subset D^n$, called \emph{uncollapsed balls}, such that the interiors of the $B_i$ are pairwise disjoint, $f$ sends each $B_i$ homeomorphically to an $n$--cell of $X$, and $f(D^n\setminus \bigcup B_i)\subset X^{(n-1)}$.  We call $k$ the \emph{admissible volume} of $f$, denoted $\avol(f;X)$ or simply $\avol(f)$ when it is clear what the target complex is. This differs slightly from the definition given in \cite{BBFS}, where the target space is a CW complex and the uncollapsed balls are open balls.

Given simplicial complexes $X$ and $Y$, we call a continuous map $f\from X\to Y$ \emph{admissible} if $f$ is cellular (i.e., $f(X^{(i)})\subset Y^{(i)}$ for all $i$) and for every $d$--cell $\sigma\in \cF^d(X)$ with $d>0$, the map $f|_\sigma$ is admissible in the sense above. Note that if $X$, $Y$, and $Z$ are simplicial complexes and $f\from X\to Y$ and $g\from Y\to Z$ are admissible, then $g\circ f$ is admissible.  

\subsection{The coarse Dehn function}

The coarse Dehn function which we introduce here is a slight variant of Gromov's definition of mesh function given in \cite{GroAII}, see also \cite[III.H.2.1]{BrH99} or \cite{Dru02}. A triangulation of the closed unit disc $D^2$ is a homeomorphism from $D^2$ to a combinatorial $2$--complex $\tau$ in which every $2$--cell is a triangle. We endow $D^2$ with the cell-structure of $\tau$.
Let $X$ be a geodesic metric space and $c\colon S^1\to X$ a Lipschitz curve. Let $\epsilon>0$. An $\epsilon$-filling of $c$ is a pair $(P, \tau)$ consisting of a triangulation $\tau$ of $D^2$ and a continuous map $P\colon \tau^{(1)}\to X$ such that $P|_{S^1}=c$ and such that $\length(P|_{\partial F})\leq \epsilon$ for every triangle $F$ in $\tau$. The $\epsilon$-area of $c$ is $$\Ar_\epsilon(c):= \min\left\{|\tau| \mid \text{$(P, \tau)$ is an $\epsilon$-filling of $c$}\right\}.$$
Here, $|\tau|$ denotes the number of triangles in $\tau$. If no $\epsilon$-filling exists then we set $\Ar_\epsilon(c):= \infty$. The \emph{$\epsilon$-coarse Dehn function} of $X$ is defined by $$\Ar_{X, \epsilon}(r):= \sup\left\{\Ar_\epsilon(c)\mid \text{$c\colon S^1\to X$ Lipschitz, $\length(c)\leq r$}\right\}$$ for $r>0$.

It is not difficult to show that the asymptotic growth of $\Ar_{X, \epsilon}(r)$ is a quasi-isometry invariant. Moreover, under mild conditions on the underlying space the function $\Ar_{X,\epsilon}(r)$ has the same growth as the so-called Lipschitz Dehn function $\delta^{\rm Lip}_X(r)$ (see \cite{LWY16}). We will however not need this. Recall here that the Lipschitz Dehn function $\delta^{\rm Lip}_X(r)$ measures how much (parametrized) area is needed to fill a curve of length $r$ by a Lipschitz disc in $X$.

\subsection{Carnot groups and Carnot-Carath\'eodory distance}\label{sec:Carnot}

A connected, simply-connected nilpotent Lie group $G$ is called a \emph{Carnot group} if its Lie algebra $\mathfrak{g}$ admits a stratification into subspaces $$\mathfrak{g} = V_1\oplus\dots\oplus V_k$$ such that $[V_1, V_i] = V_{i+1}$ for all $i=1,\dots, k-1$ and $[V_1,V_k] = 0$. Here, $[V_1,V_i]$ is the subspace spanned by the elements $[v, v']$ with $v\in V_1$ and $v'\in V_i$. The number $k$ is called the {\it step} or {\it nilpotency class} of $G$.

As $G$ is a simply-connected nilpotent Lie group, the exponential map $\exp\colon \mathfrak{g}\to G$ is a diffeomorphism.  Since it is Carnot, $G$ comes with a family of scaling homomorphisms $\delta_r\colon G\to G$ for $r\geq 0$. They are given by $$\delta_r(\exp(v)) = \exp\left(\sum_{i=1}^k r^i v_i\right)$$ for $v= v_1+\dots+v_k$ with $v_i\in V_i$. The derivative $D_x\delta_r$ of $\delta_r$ at $x\in G$ is given by 
\begin{equation}\label{eq:derivative-dil-hom}
 D_x\delta_r(D_0L_x(v)) = r^i\cdot D_0L_{\delta_r(x)}(v)
\end{equation}
 for all $v\in V_i$, where $L_x$ denotes left-translation by $x$ and $0$ is the identity element of $G$.

The \emph{horizontal bundle} $TH$ is the subbundle of $TG$ obtained by left-translation of the subspace $V_1$. Let $g_0$ be a left-invariant Riemannian metric on $G$ such that at $0$ the subspaces $V_i$ are pairwise orthogonal with respect to $g_0$. Let $d_R$ be the distance coming from $g_0$. A curve $c\colon [0,1]\to G$ which is absolutely continuous with respect to $d_R$ is called \emph{horizontal} if $c'(t)\in T_{c(t)}H$ for almost every $t\in[0,1]$. The \emph{Carnot-Carath\'eodory distance} (or \emph{Carnot metric} for short) associated with $d_R$ is defined by $$d_c(x,y):= \inf\{\length_R(c) \mid \text{ $c\colon[0,1]\to G$ horizontal curve from $x$ to $y$}\}.$$
Here, $\length_R(c)$ denotes the length of $c$ with respect to the metric $d_R$. It can be shown that the Carnot-Carath\'eodory distance is always finite and thus defines a metric. It is moreover left-invariant and $1$-homogeneous with respect to the scaling automorphisms; that is, $$d_c(\delta_r(x), \delta_r(x')) = r d_c(x,x')$$ for all $x,x'\in G$ and all $r>0$. The topologies induced on $G$ by $d_R$ and $d_c$ agree, however it is well-known that these metrics are not even locally bilipschitz equivalent (except when $G$ is abelian). Throughout this article, when we talk about a Carnot metric on $G$ we always mean one which is associated with a distance coming from a left-invariant Riemannian metric such that at $0$ the subspaces $V_i$ are orthogonal.

We will need the following simple facts.

\bl\label{lem:rel-d0-dc-Carnot}
 Let $G$ be a Carnot group of step $k$. Let $d_R$ be the distance coming from a left-invariant Riemannian metric on $G$ and let $d_c$ be the associated Carnot metric. Then there exists $L\geq 1$ such that 
 \begin{enumerate}
  \item[(i)] $d_R(x,x')\leq d_c(x,x') \leq L\cdot d_R(x,x') + L$ for all $x,x'\in G$. 
  \item[(ii)] for every $x\in G$ the curve $\gamma(r):= \delta_r(x)$ satisfies $$\length_R(\gamma|_{[s,t]}) \leq L\cdot (d_c(x,0) + 1)^k\cdot |t-s|$$ for all $0\leq s\leq t\leq 1$.
 \end{enumerate}
\el

We provide a short proof for the convenience of the reader, compare with \cite{Pit95}.

\begin{proof}
 The first inequality in (i) is clear. To prove the second inequality in (i), notice that 
 \begin{equation*}
      L:= \sup\{d_c(y,z): d_R(y,z)\leq 1\}= \sup\{d_c(0,x): d_R(0,x)\leq 1\}<\infty.
 \end{equation*}
 Let $x,x'\in G$ be distinct and set $l:= d_R(x,x')$. Let $\gamma\from [0,l]\to (G,d_R)$ be a geodesic from $x$ to $x'$, parametrized by arc-length. Let $m$ be the largest integer smaller than $l$. Then $$d_c(x,x')\leq \sum_{i=1}^m d_c(\gamma(i-1), \gamma(i)) + d_c(\gamma(m), \gamma(l))\leq L\cdot d_R(x,x') + L,$$ which proves the second inequality in (i).
 
 We now prove statement (ii). For $x\in G$ define the curve $\gamma_x\from [0,1]\to G$ by $\gamma_x(r):= \delta_r(x)$. Notice that $$L:= \sup\{\|\gamma_x'(r)\|: d_R(0,x)\leq 1, r\in[0,1]\}<\infty.$$ Let $x\in G$ and $0\leq s<t\leq 1$. Suppose that $d_R(0,x)\leq 1$. Then $$\length_R(\gamma_x|_{[s,t]})= \int_s^t\|\gamma_x'(r)\|\,dr \leq L\cdot |t-s|\leq L\cdot(d_c(x,0)+1)^k\cdot |t-s|.$$ 
 
 Now suppose that $d_R(0,x)>1$. Let $\beta$ be a geodesic with respect to $d_R$ from $0$ to $x$. By \eqref{eq:derivative-dil-hom}, if $\xi\in (0,1)$, then $$\length_R(\delta_\xi\circ\beta) \le \xi \length_R(\beta)= \xi d_R(0,x).$$
 Let $\xi=(d_R(0,x))^{-1}$, so that 
 $d_R(0,\delta_\xi(x)) \le \length_R(\delta_\xi\circ\beta) \le 1.$
 
 By \eqref{eq:derivative-dil-hom}, $\delta_{\xi^{-1}}$ distorts length by at most a factor of $\xi^{-k}$, so
 $$\length_R(\gamma_x|_{[s,t]})\leq \xi^{-k}\cdot \length_R(\delta_\xi\circ \gamma_x|_{[s,t]})=\xi^{-k}\cdot \length_R(\gamma_{\delta_\xi(x)}|_{[s,t]})\leq L\cdot \xi^{-k}\cdot |t-s|$$
 and thus $$\length_R(\gamma_x|_{[s,t]})\leq L\cdot (d_R(0,x))^k\cdot |t-s|\leq L\cdot (d_c(0,x))^k\cdot |t-s|.$$ This proves statement (ii).
\end{proof}

Finally, we consider the first Heisenberg group which is the Carnot group of step $2$ given by $\H:= \R^3$ with the multiplication 
\begin{equation}\label{eq:heismult}
(x,y,z) \cdot (x',y',z') = (x+x', y+y', z+z' + xy').
\end{equation}
A basis of left-invariant vector fields on $\H$ is given by $$X = \frac{\partial}{\partial x}, \quad Y= \frac{\partial }{\partial y} + x\frac{\partial}{\partial z}, \quad Z = \frac{\partial}{\partial z},$$ and the Lie algebra $\mathfrak{h}$ of $\H$ has the stratification $\mathfrak{h} = \operatorname{span}\{X, Y\}\oplus \operatorname{span}\{Z\}$. We denote by $\H_\Z:=\Z^3$ the integer lattice in $\H$.

Let $g_0$ be a left-invariant Riemannian metric on $\H$ such that at $0$ the subspaces $V_1$ and $V_2$ are orthogonal. It follows from \eqref{eq:derivative-dil-hom} that for $r\geq 1$ the scaling automorphism $\delta_r$ on $\H$ distorts $2$--dimensional area in $(\H, g_0)$ at most by a factor $r^3$ and $3$--dimensional volume by a factor of exactly $r^4$.

\section{From triangulations to H\"older extensions}
\label{sec:triang to holder}

In this section we prove Theorem~\ref{thm:Hoelder-ext-Carnot-Dehn-step}. The main ingredient is the next theorem, which follows from the results proved in \cite{LWY16}.

\bt\label{thm:subdiv-implies-Hoelder-ext}
 Let $X$ be a complete, geodesic metric space. Suppose there exist $K\geq 1$ and $\beta\geq 2$ such that for all $r>0$ and $n\in\N$ we have $$\Ar_{X, \frac{r}{n}}(r) \leq K\cdot n^\beta.$$ Then for every $\alpha<\frac{2}{\beta}$ the pair $(\R^2, X)$ has the $\alpha$--H\"older extension property.
\et

\begin{proof}
  Let $K\geq 1$ and $\beta\geq 2$ be as in the statement of the theorem and let $\alpha<\frac{2}{\beta}$. Choose $n$ so large that $$\eta:= \frac{\log n}{\log (2\sqrt{Kn^\beta})} = \frac{\log n}{\log 2 + \frac{1}{2}\cdot \log K + \frac{\beta}{2}\cdot \log n} > \alpha.$$ It follows from Proposition 7.4 in \cite{LWY16} that the space $X$ is $\eta$--H\"older $1$--connected and, in particular, also $\alpha$--H\"older $1$--connected. Thus, there exists $C\geq 1$ such that every $\lambda$-Lipschitz curve $c\colon S^1\to X$ admits a $(C\lambda, \alpha)$--H\"older extension to $D^2$. Now, Theorem 6.4 in \cite{LWY16} implies that the pair $(\R^2, X)$ has the $\alpha$--H\"older extension property, which completes the proof.
\end{proof}

We now apply the theorem to Carnot groups:

\bc\label{cor:Carnot-Ar-Hoelder-ext}
 Let $G$ be a Carnot group and let $d_c$ be a Carnot metric on $G$. Let $K\geq 1$ and $\beta\geq 2$ be such that for all $r\geq 1$, $$\Ar_{(G, d_c), 1}(r)\leq K\cdot r^\beta.$$ Then the pair $(\R^2, (G, d_c))$ has the $\alpha$--H\"older extension property for all $\alpha<\frac{2}{\beta}$.
\ec

\begin{proof}
 Let $n\in \N$ and $r>0$. Since scaling automorphisms are $1$--homo\-geneous with respect to $d_c$ we have $$\Ar_{(G, d_c), \frac{r}{n}}(r) = \Ar_{(G, d_c), 1}(n)\leq K\cdot n^\beta$$ and hence the corollary follows from Theorem~\ref{thm:subdiv-implies-Hoelder-ext}.
\end{proof}

Now, Theorem~\ref{thm:Hoelder-ext-Carnot-Dehn-step} follows from the corollary above together with the next lemma.

\bl\label{lem:bound-coarse-Dehn-Carnot-step}
 Let $G$ be a Carnot group of step $k$ and let $d_c$ be a Carnot metric on $G$. Then there exists $K\geq 1$ such that $$\Ar_{(G, d_c), 1}(r)\leq K\cdot r^{k+1}$$ for all $r\geq 1$. 
\el

The lemma could easily be deduced from the upper bound on the growth of the Lipschitz Dehn function proved in \cite{Pit95} and the fact that the coarse Dehn function has the same growth as the Lipschitz Dehn function (see e.g.~\cite{LWY16}). We prefer to give a direct proof here.

\begin{proof}
 Let $d_R$ be a left-invariant Riemannian metric on $G$ such that $d_c$ is the Carnot metric associated  with $d_R$. Let $L\geq 1$ be as in Lemma~\ref{lem:rel-d0-dc-Carnot}. 
 
 Using the scaling homomorphisms it suffices to show that there exists $K$ such that 
 \begin{equation}\label{eq:Carnot-bound-Ar}
  \Ar_{(G, d_c), 6L}(r)\leq K\cdot r^{k+1}
 \end{equation}
  for all $r\geq 6L$. In order to prove \eqref{eq:Carnot-bound-Ar} let $c\colon S^1\to (G, d_c)$ be a Lipschitz curve of length $r\geq 6L$. We may assume that $c$ has constant speed and that $c$ passes through $0\in G$. 
  
  We construct a $6L$-filling $(P, \tau)$ of $c$ as follows. Let $M\in\N$ be the smallest integer larger than $\frac{r}{L}$ and set $v_l:= e^{\frac{2\pi l i}{M}}$ for $l=0,\dots, M-1$. Furthermore, let $m$ be the smallest integer larger than $L\cdot (r+1)^k$. We define a triangulation $\tau$ of $D^2$ as follows. The set of vertices of $\tau$ is the subset of $D^2$ given by the points $v_{0,0}=0$ and $v_{j,l}:= \frac{j}{m}\cdot v_l$ for $j=1,\dots, m$ and $l=0,\dots, M-1$. For $l=0,\dots, M-1$ we add edges between the points $v_{j, l}$ and $v_{j+1,l}$ for $j=0,\dots, m-1$, where we have set $v_{0, l} = v_{0,0}$, between $v_{j,l}$ and $v_{j, l+1}$ for $j=1,\dots, m$, where $v_{j, M} = v_{j, 0}$,  and between $v_{j,l}$ and $v_{j+1, l+1}$ for $j=1,\dots, m-1$. This gives a triangulation $\tau$ of $D^2$ consisting of less than $2mM$ triangles and thus $|\tau| \leq K r^{k+1}$, where $K$ only depends on $L$ and $k$. We now define a map $P\colon \tau^{(1)}\to G$ by setting $P|_{S^1}:= c$ and by defining $P(v_{j,l}):= \delta_{\frac{j}{m}}(c(v_l))$. We extend $P$ to $\tau^{(1)}$ by mapping the edges in $\tau^{(1)}\setminus S^1$ to geodesics with respect to the $d_c$-distance. It remains to show that $\length_c(P|_{\partial F})\leq 6L$ for every triangle $F$ in $\tau$, where $\length_c$ denotes the length with respect to the Carnot metric $d_c$. For this we use the estimates in Lemma~\ref{lem:rel-d0-dc-Carnot}. Firstly, we have $$d_c(P(v_{j,l}), P(v_{j, l+1})) = \frac{j}{m}\cdot d_c(c(v_l), c(v_{l+1})) \leq \frac{j}{m}\cdot \length_c(c|_{[v_l, v_{l+1}]})\leq L.$$ Secondly, writing $\gamma_l(t):= \delta_t(c(v_l))$ we obtain that $$d_c(P(v_{j, l}), P(v_{j+1, l})) \leq L\cdot \length_R(\gamma_l|_{\left[\frac{j}{m}, \frac{j+1}{m}\right]}) + L\leq L^2\cdot (r+1)^k\cdot \frac{1}{m} + L \leq 2L.$$ The two inequalities together finally yield $$d_c(P(v_{j, l}), P(v_{j+1, l+1})) \leq 3L,$$ from which it follows that $\length_c(P|_{\partial F})\leq 6L$ for every triangle $F$ in $\tau$. This proves \eqref{eq:Carnot-bound-Ar}.\end{proof}

\section{Constructing H\"older maps from admissible maps}\label{sec:cellulation-and-Q}

While one can construct Hölder maps from discs to Carnot groups by constructing a sequence of nested subdivisions, it is difficult to generalize this construction to higher-dimensional domains. Instead, we will construct Hölder maps using admissible maps.

As in the introduction, we will construct a sequence of simplicial complexes $X_0, X_1, \dots$, equipped with path metrics, a sequence of maps $\iota_i\from X_i\to \H$, and a sequence of admissible maps $Q_i\from X_i\to X_{i+1}$. We then define $P_i\from X_0\to X_i$,
$$P_i:=Q_{i-1}\circ \dots \circ Q_0,$$
and let $P\from X_0 \to \H$, $P:=\lim_i \iota_i\circ P_i$. The map $P$ is the desired Hölder map.
In this section, we will give the details of the construction by defining $X_i$, $\iota_i$, and $Q_i$, and in the next section, we will show that $P$ is Hölder.

We will define the $X_i$'s to be scalings of a single triangulation $X$ of $\H$.
Recall that $\Hz\subset \H$ is the lattice of integer points.
Let $X$ be an $\Hz$--equivariant piecewise-linear (PL) triangulation of $\H$. That is, $X$ is a tuple $X=(Y,\iota, \zeta)$ of a simplicial complex $Y$, a PL homeomorphism $\iota\from Y\to \H$, and a left $\Hz$--action $\zeta[g]\from Y \to Y$, $g\in \Hz$, such that $\iota(\zeta[g](x))=g\iota(x)$ for all $g\in \Hz$, $x\in Y$. The existence of such a $\iota$ relies on the fact that by \eqref{eq:heismult}, if $\alpha\from \R^n\to \H$ is affine, then for any $g\in \H$, the translate $v\mapsto g\alpha(v)$ is also affine.
This induces a subdivision of $\H$ into simplices $\big(\iota(\sigma)\big)_{\sigma\in \cF(Y)}$. Since $\iota$ is equivariant, the action of $\Hz$ on $\H$ permutes the simplices in this subdivision.
We write $x\in X$ to denote a point in the underlying simplicial complex $Y$ of $X$ and define $gx=\zeta[g](x)$ for $g\in \Hz$, $x\in X$.
Let $d_\Delta$ be the standard metric on $X$, i.e., the path metric such that every simplex of $X$ is isometric to the unit regular simplex. Since $\iota$ is PL, it is a bilipschitz equivalence from $(X,d_\Delta)$ to $(\H,d_R)$, where $d_R$ is the Riemannian metric on $\H$.

Let $n>1$ be a large integer to be chosen later, and let $s=n^{-1}$. For each $i$ and for each $i\ge 0$, let $X_i$ be $X$ scaled by $s^i$. That is, $X_i=(Y, \iota_i, \zeta_i)$, where $\iota_i=\delta_{n^{-i}}\circ \iota$ and $\zeta_{i}[g](x)=\zeta[\delta_{n^i}(g)](x)=\delta_{n^i}(g)x$ for all $g\in \Hz$, $x\in X$.
Then $X_i$ is a triangulation of $\H$, and for each $\sigma\in \cF(X_i)$ with $\dim \sigma>0$, the simplex $\iota_i(\sigma)$  has $d_c$--diameter roughly $s^i$. Furthermore, $\iota_i$ is $\Hz$--equivariant, i.e., for all $x\in X_i$,
$$\iota_i(\zeta_{i}[g](x)) = \delta_{n^{-i}}\big(\iota\big(\zeta[\delta_{n^{i}}(g)](x)\big)\big) = \delta_{n^{-i}}\big(\delta_{n^{i}}(g)\iota(x)\big) = g \delta_{n^{-i}}(\iota(x)) = g\iota_i(x).$$
As before, for $x\in X_i$, we let $gx$ denote $\zeta_{i}[g](x)$.

We equip $X_i$ with the metric $d_i=s^id_\Delta$. Since $\iota$ is a bilipschitz equivalence from $d_\Delta$ to $d_c$ and $d_c$ is quasi-isometric to $d_R$ (Lemma~\ref{lem:rel-d0-dc-Carnot}), $\iota$ is a quasi-isometry from $(X,d_\Delta)$ to $(\H,d_c)$. That is, there is a $k>1$ such that for any $x,y\in X_0$,
\begin{equation}\label{eq:X-qi}
k^{-1} d_\Delta(x,y)-k\le d_c(\iota(x),\iota(y)) \le k d_\Delta(x,y)+k.
\end{equation}
By the scale-invariance of $d_c$, $d_c(\iota_i(x),\iota_i(y))=s^{i} d_c(\iota(x),\iota(y))$, so multiplying \eqref{eq:X-qi} by $s^i$ gives
\begin{equation}\label{eq:Xi-qi}
k^{-1} d_i(x,y)-ks^i\le d_c(\iota_i(x),\iota_i(y)) \le k d_i(x,y)+ks^i.
\end{equation}
That is, $(X_i,d_i)$ approximates $(\H,d_c)$ more and more closely as $i$ increases.

We will construct $P\from X_0\to \H$ from a sequence of $\Hz$--equivariant admissible maps $Q_i\from X_i\to X_{i+1}$. Since all of the $X_i$'s have the same underlying simplicial complex, we can construct the $Q_i$'s as a scaling of a map $Q\from X\to X$ which is equivariant as a map from $X_0$ to $X_1$, that is, $Q(\zeta_0[g](x)) = \zeta_1[g](Q(x))$ for all $g\in \Hz$, $x\in X$. Since $\zeta_{i}[g](x)=\zeta[\delta_{n^i}(g)](x)$ for all $g\in \Hz$, $x\in X$, we have $Q(\zeta[g](x))=\zeta[\delta_n(g)](Q(x))$ and
$$Q(\zeta_{i}[g](x)) = Q(\zeta[\delta_{n^i}(g)](x)) = \zeta[\delta_{n^{i+1}}(g)](Q(x)) = \zeta_{i+1}[g](Q(x))$$
for any $i\ge 0$. That is, if $Q$ is $\Hz$--equivariant as a map from $X_0$ to $X_1$, then it is $\Hz$--equivariant as a map from $X_i$ to $X_{i+1}$ for any $i\ge 0$.

We construct $Q$ so that it satisfies the following properties.
A \emph{similarity} from one simplex to another is a map that scales the metric by a constant factor, i.e., a rotation or reflection followed by a scaling.
\begin{lemma}\label{lem:Qi-conds}
  There are $c,n_0>1$ such that for every $n>n_0$ there is an admissible, surjective, degree--1, $\Hz$--equivariant Lipschitz map $Q\from X_0\to X_1$ that has the following properties. Let $Q_i\from (X_i,d_i)\to (X_{i+1},d_{i+1})$ denote $Q$ viewed as a map from $X_i$ to $X_{i+1}$.
  \begin{enumerate}
  \item For all $i$, $\Lip(Q_i)=\Lip(Q)$ and $\Lip(Q_i|_{X_i^{(1)}})< c$.
  \item $Q_i$ has small displacement. That is, for every $x\in X_i$, 
  $$d_c(\iota_i(x),\iota_{i+1}(Q_i(x)))< cs^i.$$
  \item Let $a=c^{-1} n^{-\frac{3}{2}}$; note that $0<a<s<1$.
    For $\sigma\in \cF^d(X_i)$ with $d=2,3$, every uncollapsed ball $B$ of $Q_i|_\sigma$ is a regular simplex with edges of length $a s^i$, i.e., $B$ is a copy of $\sigma$ scaled by $a$.
  \item 
    For any uncollapsed ball $B$, the restriction $Q_i|_B$ is a similarity from $B$ to a simplex of $X_{i+1}$, with scale factor $\frac{s}{a}$.
  \item
    There is an $r>0$ independent of $n$ such that for any $\sigma\in \cF^3(X_0)$, there is a $q\in \H$ such that
    $$d_c(q,\iota_{1}(Q(X_0\setminus \sigma))) > r.$$
  \end{enumerate}
\end{lemma}
Before we prove the lemma, we will sketch how to use the lemma  to construct Hölder maps. Indeed, the map in Theorem~\ref{thm:3d equivariant} can be constructed from a map $Q_i$ that satisfies conditions (1), (3), (4), and the condition that there is some $D(n)>0$ such that
\begin{equation}\label{eq:weak-cond-2}
d_c(\iota_i(x),\iota_{i+1}(Q_i(x)))< D(n) s^i \qquad \text{ for all } x\in X.
\end{equation}
This is a version of (2) in which the bound on displacement is allowed to depend on $n$.

Given $Q$ and $Q_i$ satisfying these conditions, we let $P_i\from X_0\to X_i$, $P_i=Q_{i-1}\circ \dots \circ Q_0$. We claim that $(\iota_i\circ P_i)_i$ converges uniformly as $i\to \infty$. 
By equation \eqref{eq:weak-cond-2}, for any $x\in X_0$, we have
$$d_c\big(\iota_i(P_i(x)),\iota_{i+1}(P_{i+1}(x))\big) = d_c\big(\iota_i(P_i(x)),\iota_{i+1}(Q_i(P_{i}(x)))\big) < D(n)s^i,$$
so $(\iota_i\circ P_i)_i$ is uniformly Cauchy. Let $P\from X_0 \to \H$, $P=\lim_i \iota_i\circ P_i$. Since $s\le \frac{1}{2}$,
\begin{equation}\label{eq:unif Cauchy D}
  d_c\big(\iota_i(P_i(x)),P(x)\big) < \frac{D(n) s^i}{1-s} \le 2D(n)s^i.
\end{equation}

Combining \eqref{eq:Xi-qi} and \eqref{eq:unif Cauchy D} with a bound on the growth of the Lipschitz constants of the $P_i$'s leads to Hölder bounds on $P$. For example, condition (1) of Lemma~\ref{lem:Qi-conds} implies the following bound.
\begin{lemma}\label{lem:1+e-holder}
  Let $Q_i$, $P_i$, and $P$ be as above and let $\epsilon>0$. If $n$ is sufficiently large, then $P|_{X_0^{(1)}}$ is locally $(1-\epsilon)$--Hölder.
\end{lemma}
\begin{proof}
  Let $x,y\in X_0^{(1)}$ such that $0<d_0(x,y)<1$.
  Let $\rho=\frac{s}{c}<1$ and let $i\ge 0$ be the integer such that $\rho^{i+1}\le d_0(x,y) < \rho^i$. By condition (1) of Lemma~\ref{lem:Qi-conds} and the fact that $Q_i$ is cellular, we have $\Lip(P_i|_{X_0^{(1)}})\le c^i$. Let $k$ be as in \eqref{eq:Xi-qi}. Then by \eqref{eq:unif Cauchy D},
  \begin{multline*}
    d_c(P(x),P(y)) \le 4D(n)s^i + d_c(\iota_i(P_i(x)), \iota_i(P_i(y))) \\
    \le 4D(n)s^i + k c^i \rho^i + k s^i = \frac{4D(n)+2k}{s}s^{i+1} \le K' d_0(x,y)^{\frac{\log s}{\log \rho}},
  \end{multline*}
  where $K'=K'(n)=\frac{4D(n)+2k}{s}$. As $n\to \infty$,
  $$\lim_{n\to \infty} \frac{\log s}{\log \rho} = \lim_{n\to \infty} \frac{- \log n}{- \log n - \log c} = 1.$$
  When $n$ is large enough that $\frac{\log s}{\log \rho}>1-\epsilon$,
  $$d_c(P(x),P(y))\le K' d_0(x,y)^{\frac{\log s}{\log \rho}}\le K' d_0(x,y)^{1-\epsilon},$$
  as desired.
\end{proof}

That is, a bound of the form $\Lip(P_i)\lesssim K(n) L^i$ implies that $P$ is locally $\frac{\log s}{\log \frac{s}{L}}$--Hölder. To prove Theorem~\ref{thm:3d equivariant}, it suffices to show that $\Lip(P_i)\lesssim K(n)\cdot (\frac{s}{a})^i$ for some $K(n)$, and we will use conditions (3) and (4) to prove this bound in  Section~\ref{sec:holder-bounds}.

When $n$ is sufficiently large, condition (5) lets us show that for any $\sigma\in \cF^3(X_0)$, there is a $y\in \sigma$ such that 
$$d_c(P(y),P(X_0\setminus \sigma)) > r - 2cs > \frac{r}{2};$$
we will use this in Section~\ref{sec:Hoelder-R3-Hc} to construct a proper tangent cone of $P$ and prove Theorem~\ref{thm:3d tangent cone}.

Since Theorem~\ref{thm:3d equivariant} relies only on the existence of a map satisfying equation \eqref{eq:weak-cond-2} and conditions (1), (3), and (4), we will sketch the construction of such a map for readers who are primarily interested in Theorem~\ref{thm:3d equivariant}.
For each vertex $v\in \cF^0(X_0)$, choose $Q(v)$ to be a vertex of $X_1$ that minimizes the distance $d_c(\iota_0(v),\iota_1(Q(v)))$. By \eqref{eq:Xi-qi}, for each edge $e=[v,w]$, we have $d_1(Q(v),Q(w))\lesssim 1$. Define $Q$ on $e$ to be a shortest path in $X_1^{(1)}$ from $Q(v)$ to $Q(w)$, parametrized with constant speed. There is a $c_0>0$ independent of $n$ such that this path has at most $c_0n$ edges, so $\Lip(Q|_{X_0^{(1)}})\le c_0$, i.e., $Q$ satisfies condition (1).

For every triangle $\sigma\in \cF^2(X_0)$, the image $Q(\partial \sigma)$ is a closed curve in $X_1^{(1)}$ with at most $3c_0n$ edges. Since the Heisenberg group has a cubic Dehn function, results of \cite{BBFS} imply that there is a $c_1>0$ such that we can extend $Q$ to an admissible map on $\sigma$ with $\avol(Q|_\sigma)\le c_1 n^3$. We can take the uncollapsed discs of $Q|_\sigma$ to be a collection of disjoint balls. There is a homeomorphism from $\sigma$ to itself that sends these disjoint balls to a collection of disjoint regular simplices of edge length roughly 
$$\left(\frac{1}{c_1 n^3}\right)^{\frac{1}{2}}\approx n^{-\frac{3}{2}}.$$
We compose $Q$ with this homeomorphism so that it satisfies condition (3), and by applying a further homotopy, we can ensure that it satisfies condition (4), see Lemma~\ref{lem:normalizing uncollapsed} and Appendix~\ref{app:admissible-maps}.

Now, for every tetrahedron $\Delta\in \cF^3(X_0)$, the image $Q(\partial \Delta)$ is a $2$--sphere in $X_1$ consisting of at most $4c_1n^3$ triangles. The filling volume function of the Heisenberg group grows like $\FV^3(t)\approx t^{\frac{4}{3}}$, so, using results of \cite{BBFS} again, we can extend $Q$ to an admissible map on $\Delta$ with disjoint uncollapsed balls and  $\avol(Q|_\Delta)\le \FV^3(4c_1n^3)\le c_2 n^4$.
There is a homeomorphism that sends the uncollapsed balls of $Q|_\Delta$ to disjoint simplices, so we can adjust $Q$ to satisfy conditions (3) and (4). 

All of these constructions can be done equivariantly, so $Q$ can be chosen to be $\Hz$--equivariant.
Since $\Hz$ acts cocompactly on $\H$, the map $\iota_1\circ Q \circ \iota_0^{-1}$ has bounded displacement; say that there is some $D(n)$ such that
$$d_c(\iota_0(x),\iota_1(Q(x)))\le D(n)$$
for all $x\in X$. Then
$$d_c(\iota_i(x),\iota_{i+1}(Q_i(x)))=d_c(\delta_{s^i}(\iota_0(x)),\delta_{s^i}(\iota_{1}(Q(x)))) \le s^i D(n),$$
so equation \eqref{eq:weak-cond-2} holds. Then $Q$ and the $Q_i$ satisfy the desired conditions, so $Q$ can be used in the proof of Theorem~\ref{thm:3d equivariant}. Readers who are primarily interested in Theorem~\ref{thm:3d equivariant} may turn to Section~\ref{sec:holder-bounds} for the proof.

\medskip 

To achieve conditions (2) and (5) as well, we need stronger control over the geometry of $Q$. We thus construct $Q$ by first constructing a map $\phi\from X_0\to \H$ which is close to $\iota$, then approximating $\iota_1^{-1}\circ\phi$ by an admissible map $A\from X_0\to X_1$, and finally adjusting $A$ to satisfy conditions (3) and (4). Let $\kappa > 0$ be small enough that for every $3$--simplex $\Delta\in \cF^3(X_0)$, the image $\iota_0(\Delta)$ contains a $d_c$--ball of radius $\kappa$.
Any curve in $\H$ can be approximated arbitrarily closely by a horizontal curve, so there is an $\Hz$--equivariant PL map $\phi\from X_0 \to \H$ such that $\phi$ is horizontal on every edge of $X_0$ and 
\begin{equation}\label{eq:iota-kappa}
d_c(\phi(x),\iota_0(x))< \frac{\kappa}{2}
\end{equation}
for all $x\in X_0$.
Then $\iota_1^{-1}\circ \phi$ is an $\Hz$--equivariant PL map from $X_0$ to $X_1$.
By the following lemma, which is proved in Appendix~\ref{app:admissible-maps}, we can approximate $\iota_1^{-1}\circ \phi$ by an admissible map $A\from X_0\to X_1$. 
\begin{lemma}\label{lem:quant-admissible}
    Let $Y$, $Z$ be finite dimensional simplicial complexes equipped with the standard metric, and let $\psi\from Y\to Z$ be a Lipschitz map. Then there is an admissible Lipschitz map $\overline{\psi}\from Y\to Z$ such that:
    \begin{enumerate}
    \item 
        For a subset $S\subset Z$, let $\supp S$ be the smallest subcomplex of $Z$ containing $S$. For any simplex $\sigma\in \cF(Y)$, we have $\overline{\psi}(\sigma) \subset \supp \psi(\sigma)$.
    \item 
        For any edge $e\in \cF^1(Y)$, $\overline{\psi}(e)$ is an edge path parametrized with constant speed.
    \item 
        For any simplex $\sigma\in \cF(Y)$,
        \begin{equation}\label{eq:quant-avol-bound}
        \avol(\overline{\psi}|_\sigma) \le b\cdot \sum_{\sigma'\in\cF(\sigma)}\vol^{\dim(\sigma')}(\psi|_{\sigma'})
        \end{equation}
        for some constant $b$ depending only on the dimension of $Z$.
    \end{enumerate}
    If $\psi$ is equivariant then $\overline{\psi}$ can be taken to be equivariant as well.
\end{lemma}
Let $A=\overline{\iota_1^{-1}\circ \phi}\from X_0\to X_1$ be the map obtained by applyinh Lemma~\ref{lem:quant-admissible} to the map $\psi=\iota_1^{-1}\circ \phi$.
We claim that $A$ is Lipschitz on $X_0^{(1)}$ with a Lipschitz constant independent of $n$ and that $\avol(A|_\sigma)\lesssim C n^{d+1}$ for all $d\ge 2$ and $\sigma\in \cF^d(X_0)$. The bound on the Lipschitz constant will help us achieve condition (1) in Lemma~\ref{lem:Qi-conds}, and the bound on $\avol(A|_\sigma)$ will help us achieve conditions (3) and (4).

Let 
$$M=\max_{\sigma\in \cF^d(X_0), d\ge 1} \vol^d_R(\phi|_\sigma),$$
where $\vol^d_R$ is $d$--volume with respect to the Riemannian metric $d_R$ on $\H$. 
This maximum exists because there are only finitely many cells of $\Hz\backslash X_0$.
We can bound the number of uncollapsed discs of $A$ in terms of $M$ and $n$. Let $\ell_0$ and $\ell_1=n^{-1}\ell_0$ be length with respect to $d_0$ (the standard metric) and $d_1$, and let $\vol^d_0$ and $\vol^d_1$ denote $d$--volume with respect to $d_0$ and $d_1$.

Let $e$ be an edge of $X_0$. Then $A|_e$ is an edge path in $X_1$. By \eqref{eq:quant-avol-bound},
$$\ell_1(A|_e) = n^{-1} \avol(A|_e) \le n^{-1} b \big(\ell_0(\iota_1^{-1}\circ \phi|_e) + 2\big).$$
Since $\phi$ is horizontal on each edge $e$ of $X_0$ and $\iota$ is bilipschitz,
$$\ell_0(\iota_1^{-1}\circ \phi|_e) = \ell_0(\iota^{-1}\circ \delta_n\circ  \phi|_e) \approx \ell_R(\delta_n\circ \phi|_e) = n\ell_R(\phi|_e) \le nM.$$
Therefore, 
$$\ell_1(A|_e) \le n^{-1} b(nM+2) \le b M + 2bn^{-1},$$
so $A|_{X_0^{(1)}}$ is $((M+2)b)$--Lipschitz.

Recall from Section~\ref{sec:Carnot} that $\delta_n$ distorts $2$--dimensional area by a factor of at most $n^3$ and distorts $3$--dimensional volume by a factor of $n^4$. Thus if $d\ge 2$,
$$\vol^d_0(\iota_1^{-1}\circ \phi|_\sigma) \approx \vol_{R}^d(\delta_n\circ \phi|_\sigma) \le n^{d+1}M.$$
It follows that there is a $C=C(X,\phi)>1$ such that $\Lip(A|_{X_0^{(1)}})<C$ and 
\begin{equation}\label{eq:avol-A}
  \avol(A|_\sigma) \le Cn^{d+1}
\end{equation}
for any $d\ge 2$ and any $\sigma\in \cF^d(X_0)$.

We construct $Q$ by adjusting $A$ to make the uncollapsed balls into regular simplices. We use the following lemma, which will be proved in Appendix~\ref{app:admissible-maps}. A \emph{collared ball} in a $d$--dimensional manifold $M$ is the image of the radius--$\frac{1}{2}$ Euclidean $d$--ball $B(0,\frac{1}{2})\subset \R^d$ under an embedding of the unit ball $B(0,1)$.
\begin{lemma}\label{lem:normalizing uncollapsed}
  Let $Y$, $Z$ be finite dimensional simplicial complexes equipped with a multiple of the standard metric, and let $f\from Y\to Z$ be an admissible map. 
  For each simplex $\sigma\in \cF^d(Y)$ with $d\ge 2$, let $B^\sigma_1,\dots, B^\sigma_{\avol(f|_\sigma)}\subset \sigma$ be the uncollapsed balls of $f|_\sigma$ and let $\tau^\sigma_i=f(B^\sigma_i)\in \cF^d(Z)$ be the corresponding cells of $Z$.
  
  Let $C^\sigma_1,\dots, C^\sigma_{\avol(f|_\sigma)}\subset \sigma$ be disjoint collared balls and let $g^\sigma_i\from C^\sigma_i\to \tau^\sigma_i$ be homeomorphisms with the same orientation as $f|_{B^\sigma_i}$. Then $f$ is homotopic to an admissible map $h\from Y\to Z$ such that:
  \begin{enumerate}
    \item $h$ agrees with $f$ on $Y^{(1)}$.
    \item For each simplex $\sigma\in \cF^d(Y)$ with $d\ge 2$, the uncollapsed balls of $h|_\sigma$ are the $C^\sigma_i$'s, and for each $i$, $h$ agrees with $g^\sigma_i$ on $C^\sigma_i$.
    \item For every simplex $\sigma\in \cF^d(Y)$, $h(\sigma)\subset \supp f(\sigma)$.
  \end{enumerate}
  If $f$ is Lipschitz and the $g^\sigma_i$'s are Lipschitz, we can take $h$ to be Lipschitz on each cell of $Y$; if $f$ and the $g^\sigma_i$'s are equivariant, we can take $h$ to be equivariant as well.
\end{lemma}

We use these lemmas to prove Lemma~\ref{lem:Qi-conds}.
\begin{proof}[{Proof of Lemma~\ref{lem:Qi-conds}}] 
  We first construct $Q$. Let $c>0$ be a large number to be chosen later. For each simplex $\sigma\in \cF^d(X_0)$ with $d\ge 2$, let $B^\sigma_1,\dots, B^\sigma_{\avol(A|_\sigma)}$ be the uncollapsed balls of $A|_\sigma$ and let $\tau^\sigma_i=A(B^\sigma_i)\in \cF^d(X_1)$.

  Let $C>1$ be as in the remarks after Lemma~\ref{lem:quant-admissible}, so that $\Lip(A|_{X_0^{(1)}})<C$ and $\avol(A|_\sigma)\le Cn^{d+1}$ for each $d=2,3$ and each simplex $\sigma\in\cF^d(X_0)$. 
  Let $\beta>0$ be small enough that for $d=2,3$ and for any $N>0$, one can construct $N$ disjoint regular simplices of edge length $\beta N^{-\frac{1}{d}}$ in the interior of the unit regular simplex $\Delta^d$ (for instance, by inscribing a cube in $\Delta^d$, subdividing the cube into a grid with $\lceil N^{-\frac{1}{d}}\rceil$ grid cells on an edge, then inscribing a regular simplex in each grid cell).
  
  Let $c_0= \beta^{-1} C^{\frac{1}{2}}$. For any $c>c_0$ and $d=2,3$,
  $$a=c^{-1} n^{-\frac{3}{2}} < \beta C^{-\frac{1}{2}} n^{-\frac{3}{2}} \le \beta (Cn^{d+1})^{-\frac{1}{d}}.$$
  Thus for each $d=2,3$ and each cell $\sigma\in \cF^d(X_0)$, we can construct
  $$C_1^\sigma, \dots, C^\sigma_{\avol(A|_\sigma)}\subset \inter \sigma$$
  which are disjoint regular simplices of edge length $a$. For each $\sigma$ and $i$, let $g^\sigma_i\from C^\sigma_i \to \tau^\sigma_i$ be a similarity with the same orientation as $A|_{B^\sigma_i}$ and scale factor $\frac{\diam \tau^\sigma_i}{\diam C^\sigma_i}=\frac{s}{a}$.
  
  Let $Q\from X_0\to X_1$ be a Lipschitz, $\Hz$--equivariant, admissible map  satisfying Lemma~\ref{lem:normalizing uncollapsed} for this choice of $C^\sigma_i$ and $g^\sigma_i$. Then $Q_0=Q$ satisfies conditions (3) and (4) of Lemma~\ref{lem:Qi-conds}. Since $Q_i$ is the same map as $Q$, with the metric on the domain and range scaled by $s^i$, $Q_i$ satisfies conditions (3) and (4) for any $i$.
  
  We claim that $Q_i$ satisfies the rest of the desired conditions. Since $d_i=s^id_0$ for all $i$, we have $\Lip(Q)=\Lip(Q_i)$ for all $i$. By Lemma~\ref{lem:normalizing uncollapsed}, $Q|_{X_0^{(1)}} = A|_{X_0^{(1)}}$, so 
  $\Lip(Q_i|_{X_i^{(1)}}) < C$ by our choice of $C$.
  Thus, (1) holds when $c>C$.
  
  Condition (2) claims that 
  $d_c(\iota_0(x),\iota_1(Q(x)))\lesssim 1$
  for all $x\in X_0$.
  Let $x\in X_0$ and let $\sigma$ be a simplex of $X_0$ that contains $x$. Then
  $$d_c(\iota_0(x),\iota_1(Q(x))) \le d_c(\iota_0(x),\phi(x)) + d_c(\phi(\sigma),\iota_1(Q(x))) + \diam \phi(\sigma).$$
  By \eqref{eq:iota-kappa}, $d_c(\iota_0(x),\phi(x))< \frac{\kappa}{2}\lesssim 1$. Likewise, $$\diam \phi(\sigma) \le \max_{\delta\in \cF(X_0)} \diam \phi(\delta) \lesssim 1.$$
  Finally, we have 
  $$Q(x)\in Q(\sigma)\subset \supp A(\sigma) \subset \supp \supp(\iota_1^{-1}(\phi(\sigma))) = \supp(\iota_1^{-1}(\phi(\sigma))).$$
  Since simplices of $X_1$ have side length $s$,
  $$d_1(Q(x),\iota_1^{-1}(\phi(\sigma)))\le s.$$
  By \eqref{eq:Xi-qi},
  $$d_c(\iota_1(Q(x)),\phi(\sigma))\le 2ks \lesssim 1.$$
  Summing these inequalities, we find that there is a universal constant $c_2>0$  such that $d_c(\iota_0(x),\iota_1(Q(x)))\le c_2$. Then condition (2) holds for all $c>c_2$. 
  
  In fact, $\iota_0$ and $\iota_1\circ Q\from X_0\to \H$ are homotopic by a straight-line homotopy that moves each point distance at most $c_2$. Since $\iota_0$ is a degree--1 map, so is $\iota_1\circ Q$. Thus $Q$ is a surjective, degree--$1$ map.
  
  Finally, recall that by the construction of $\iota_0$, there is a $\kappa>0$ such that for every $\sigma\in \cF^3(X_0)$, there is a point $q\in \H$ such that if $y\in X_0\setminus \inter \sigma$, then $d_c(q,\iota_0(y))>\kappa$. By \eqref{eq:iota-kappa}, 
  \begin{equation}\label{eq:phi-far}
  d_c(q,\phi(y))>\frac{\kappa}{2}.
  \end{equation}
  Let $r=\frac{\kappa}{4}$, let
  $$D=\max_{\alpha\in \cF(X_0)} \diam \iota_0(\alpha)$$
  and let $n_0=\frac{4D}{\kappa}.$ Suppose that $n>n_0$.
  
  Suppose that $\tau\in \cF^3(X_0)$ and $\tau \ne \sigma$. Since $\tau\subset X_0\setminus \inter \sigma$, we have $d_c(q,\phi(\tau))>\frac{\kappa}{2}$, and we claim that $d_c(q,\iota_1(Q(\tau))) > r$.
  By Lemma~\ref{lem:quant-admissible} and Lemma~\ref{lem:normalizing uncollapsed},
  $$Q(\tau)\subset \supp A(\tau) = \supp \left(\overline{\iota_1^{-1}\circ \phi}(\tau)\right) \subset \supp \left(\iota_1^{-1}(\phi(\tau))\right).$$
  That is, for any $w\in Q(\tau)$, there are $z\in \tau$ and $\alpha\in \cF(X_1)$ such that $\{w, \iota_1^{-1}(\phi(z))\} \subset \alpha$ and thus $$d_c(\iota_1(w), \phi(z)) \le \diam \iota_1(\alpha) \le Dn^{-1}\le \frac{\kappa}{4}.$$
  By the triangle inequality,
  $$d_c(q, \iota_1(w)) \ge d_c(q,\phi(z)) - d_c(\phi(z), \iota_1(w)) > \frac{\kappa}{4} = r.$$
  Therefore, $d_c(q,\iota_1(Q(\tau))) > r$. Since this holds for every $\tau\ne \sigma$, we have 
  $$d_c(q,\iota_1(Q(X_0\setminus \sigma)))>r.$$ This proves condition (5).
\end{proof}

\section{Hölder bounds for $P$}\label{sec:holder-bounds}

Let $c$ be as in Lemma~\ref{lem:Qi-conds}.
By condition (2) of Lemma~\ref{lem:Qi-conds}, for any $x\in X_0$, we have $d_c(\iota_i(P_i(x)),\iota_{i+1}(P_{i+1}(x))) < cs^i,$
so the $P_i$'s converge uniformly. Let $P=\lim_i \iota_i\circ P_i$. Then
\begin{equation}\label{eq:unif Cauchy}
  d_c\big(\iota_i(P_i(x)),P(x)\big) < \frac{c s^i}{1-s} \le 2cs^i.
\end{equation}

In this section, we will prove an exponential bound on $\Lip(P_i)$ and combine it with \eqref{eq:unif Cauchy} to prove Hölder bounds on $P$. We bound $\Lip(P_i)$ as follows.
\begin{lemma}\label{lem:lip growth}
  Let $c$, $n$, $a=c^{-1}n^{-\frac{3}{2}}$, and $Q_i\from X_i\to X_{i+1}$ be as in Lemma~\ref{lem:Qi-conds}. Let $P_i=Q_{i-1}\circ \dots \circ Q_0$. There is a $C>0$ depending on $X$ and $n$ such that $\Lip(P_i)\le C \cdot (\frac{s}{a})^i$.
\end{lemma}

\begin{proof}
  Let $\lambda_i\from X_0\to \R$ be the local Lipschitz constant
  $$\lambda_i(x)=\lim_{r\to 0^+} \Lip(P_i|_{B(x,r)}),$$
  where $B(x,r)$ denotes the ball with respect to $d_0$. Let $L=\Lip(Q_1)$; since the $Q_i$'s are scalings of $Q_1$, we have $L=\Lip(Q_i)$ for all $i$. Since $Q_i$ scales uncollapsed balls by $\frac{s}{a}$, we have $\frac{s}{a}\le L$, but $\frac{s}{a}$ will typically be much smaller than $L$. In general, $\lambda_{i+1}(x)\le L \lambda_i(x)$, and we will prove the lemma by showing that $\lambda_{i+1}(x)\le \frac{s}{a}\lambda_i(x)$ for all but six values of $i$.
  
  For each $i$, define $D_i \from X_0\to \{0,1,2,3\}$ so that $D_i(x)$ is the dimension of the smallest cell of $X_i$ containing $P_i(x)$. Let
  $$\widehat{D}_i(x)=\lim_{r\to 0^+} \max D_i(B(x,r)).$$
  Then $\widehat{D}_0(x)=3$ and $\widehat{D}_i(x)\ge D_i(x)$ for all $x$ and $i$. 
  Since $Q_i$ is a cellular map, we have $D_{i+1}(x)\le D_i(x)$ and $\widehat{D}_{i+1}(x)\le \widehat{D}_i(x)$ for all $x$ and $i$.

  Let $x\in X_0$. We claim that $\widehat{D}_i(x)=D_i(x)=D_{i+1}(x)$ for all but at most six values of $i$ and that if $\widehat{D}_i(x)=D_i(x)=D_{i+1}(x)$, then $\lambda_{i+1}(x)\le \frac{s}{a}\lambda_i(x)$.
  
  
  First, since $(D_i(x))_i$ is non-increasing, there are at most three values of $i$ such that $D_i(x)\ne D_{i+1}(x)$. 
  Suppose that $\widehat{D}_i(x)\ne D_i(x)$; we must have $\widehat{D}_{i}(x) > {D}_i(x)$. Let $d=D_i(x)$, so that $P_i(x)\in X_i^{(d)}$.
  By condition (3) of Lemma~\ref{lem:Qi-conds}, there is an $r>0$ such that $P_{i}(B(x,r))$ does not intersect any uncollapsed $(d+1)$--ball and thus $P_{i+1}(B(x,r))\subset X_{i+1}^{(d)}$, i.e., $\widehat{D}_{i+1}(x)\le D_i(x) < \widehat{D}_{i}(x)$. Since $(\widehat{D}_i(x))_i$ is non-increasing, this can happen for at most three values of $i$. There are thus at most six values of $i$ such that $D_i(x)\ne D_{i+1}(x)$ or $\widehat{D}_i(x)\ne D_i(x)$.
  
  
  For every other value of $i$, we have $\widehat{D}_i(x)=D_i(x)=D_{i+1}(x)$. If $\widehat{D}_i(x)\le 1$, then there is some $r>0$ such that $P_i(B(x,r))\subset X_i^{(1)}$. By condition (1) of Lemma~\ref{lem:Qi-conds}, $\Lip(Q_i|_{P_i(B(x,r))}) < c \le \frac{s}{a}$, so $\lambda_{i+1}(x)\le\frac{s}{a}\lambda_{i}(x)$.

  Otherwise, if $\widehat{D}_i(x) > 1$, let $d=D_i(x)$. Since $D_i(x)=D_{i+1}(x)$, $P_i(x)$ lies in the interior of some uncollapsed $d$--ball $B$ of $Q_i(x)$. Since $\widehat{D}_i(x)=d$, there is some $r>0$ such that $P_i(B(x,r))\subset X^{(d)}_i$. Thus, if $r>0$ is sufficiently small, we have $P_i(B(x,r))\subset B$. Since $Q_i$ rescales $B$ by a factor of $\frac{s}{a}$, we have $\lambda_{i+1}(x)=\lambda_{i}(x) \frac{s}{a}$.

  Thus, for up to six values of $i$, we have $\lambda_{i+1}(x)\le L \lambda_{i}(x)$, and for all the rest, $\lambda_{i+1}(x)\le  \frac{s}{a} \lambda_{i}(x)$. Since $\lambda_0(x)=1$, this implies
  $$\lambda_i(x)\le L^6 \left(\frac{s}{a}\right)^i$$
  for all $x$. Since $X_0$ is a path metric space, 
  $$\Lip(P_i) \le \sup_{x\in X_0} \lambda_i(x) \le L^6 \left(\frac{s}{a}\right)^i,$$
  as desired.
\end{proof}

Given the lemma, one can use \eqref{eq:unif Cauchy} to show that $P$ is locally Hölder and prove Theorem~\ref{thm:3d equivariant}. This is the same argument as in Lemma~\ref{lem:1+e-holder}.
\begin{proof}[{Proof of Theorem~\ref{thm:3d equivariant}}]
  Let $\alpha<\frac{2}{3}$ and let $c>1$ be as in Lemma~\ref{lem:Qi-conds}.
  Since $a=c^{-1}n^{-\frac{3}{2}}$ and $s=n^{-1}$,
  $$\lim_{n\to \infty}\frac{\log s}{\log a} = \lim_{n\to \infty}\frac{\log n^{-1}}{\log c^{-1}n^{-\frac{3}{2}}} = \frac{2}{3}.$$
  Let $n$ be large enough that $\frac{\log s}{\log a}\ge \alpha$. 
  Let $k>1$ be the quasi-isometry constant of $\iota$ as in \eqref{eq:X-qi} and \eqref{eq:Xi-qi}, and let $C$ be as in Lemma~\ref{lem:lip growth} so that $\Lip(P_i)\le C\cdot (\frac{s}{a})^i$.

  We claim that $P$ is locally $(K,\alpha)$--Hölder, where $K=\frac{4c+kC+k}{s}$.
  Let $x,y\in X_0$ such that $0<d_0(x,y)<1$. Let $i\ge 0$ be the integer such that $a^{i+1}\le d_0(x,y) < a^i$. Then, by \eqref{eq:unif Cauchy} and \eqref{eq:Xi-qi},
  \begin{multline*}
    d_c(P(x),P(y)) \le 4cs^i + d_c(\iota_i(P_i(x)), \iota_i(P_i(y)))\\ \le 4cs^i + k \Lip(P_i) a^i + ks^i 
    \le K s^{i+1}
    \le K d_0(x,y)^\alpha,
  \end{multline*}
  as desired.

  Since $\iota_0$ is a bilipschitz equivalence from $(\H,d_R)$ to $X_0$, the composition $g=P\circ \iota_0^{-1}\from (\H,d_R)\to (\H,d_c)$ is an $\Hz$--equivariant, locally $\alpha$--Hölder map. Since $\H$ is contractible, any such map is equivariantly homotopic to the identity.  
\end{proof}

We prove Theorem~\ref{thm:approximations} by using $g$ to construct a sequence of locally H\"older maps that converge to the identity map.
\begin{proof}[{Proof of Theorem~\ref{thm:approximations}}]
  Let $Y$ be a compact metric space and let $\varphi \from Y \to \H$ be a continuous map.  Let $\epsilon>0$ and let $0<\alpha<\frac{2}{3}$. 

  We first approximate $\varphi$ by a Lipschitz map.  Let $\delta>0$ be such that if $x,y\in \H$ and $d_R(x,y)<\delta$, then $d_c(x,y)<\frac{\epsilon}{2}$.  Let $B\subset \H$ be a ball containing $\varphi(Y)$ and note that the Riemannian metric on $B$ is bilipschitz equivalent to the Euclidean metric on $B$ (with a constant depending on the radius of $B$).  Lipschitz maps from $Y$ to $\R^3$ are dense in the space of continuous maps by \cite[6.8]{Heinonen} or by the Stone-Weierstrass theorem, so there is a Lipschitz map $\lambda\from Y\to (\H,d_R)$ such that $d_R(\varphi(y),\lambda(y))<\delta$ for every $y\in Y$ and hence $d_c(\varphi(y), \lambda(y))<\frac{\epsilon}{2}$.

  By Theorem~\ref{thm:3d equivariant}, there is an $\Hz$--equivariant map $g\from (\H,d_R) \to (\H,d_c)$ that is locally $\alpha$--H\"older.  The $\Hz$--equivariance of $g$ implies that there is an $m>0$ such that $m\ge d_c(x,g(x))$ for all $x\in \H$.  Let $r=\frac{\epsilon}{2m}$.  For any $x\in \H$,
  $$d_c\big(x,(\delta_r\circ g\circ\delta_r^{-1})(x)\big)= r\cdot d_c\big(\delta_r^{-1}(x), g(\delta_r^{-1}(x))\big) \le rm =\frac{\epsilon}{2}.$$

  Let $\psi=\delta_r\circ g \circ \delta_r^{-1} \circ \lambda$.  Then $\psi$ is locally $\alpha$--H\"older with respect to $d_c$, and since $Y$ is compact, a locally $\alpha$--H\"older map from $Y$ to $\H$ is globally $\alpha$--H\"older.  For any $y\in Y$,
  \begin{multline*}
    d_c(\varphi(y),\psi(y))\le d_c(\varphi(y),\lambda(y))+d_c(\lambda(y),\psi(y)) \\ \le \frac{\epsilon}{2} + d_c\big(\lambda(y), (\delta_r\circ g\circ \delta_r^{-1})(\lambda(y))\big) \le \epsilon,
  \end{multline*}
  so $\psi$ is an $\alpha$--H\"older map that is $\epsilon$--close to $\varphi$. 
\end{proof}

\section{A globally H\"older map from $\R^3$ to $(\H, d_c)$}\label{sec:Hoelder-R3-Hc}

In this section, we prove Theorem~\ref{thm:3d tangent cone} by constructing a self-similar globally H\"older map from $\R^3$ to $\H$. 

First, we show that the construction in Section~\ref{sec:cellulation-and-Q} produces a self-similar map. 
Recall that all the complexes $X_i$ have the same underlying simplicial complex $X$ and that all of the maps $Q\from X_0\to X_1$ and $Q_i\from X_i\to X_{i+1}$ are the same as maps from $X$ to $X$.
The main difference between the $X_i$'s is that they are equipped with different metrics $d_i=s^i d_\Delta$, different actions $\zeta_i[g]=\zeta[\delta_{n^i}(g)]$ (where $\zeta[g]\from X\to X$ is the action of $\Hz$ on $X$), and different equivariant maps $\iota_i\from X\to \H$, $\iota_i=\delta_{n^{-i}}\circ \iota$ (where $\iota\from X\to \H$ is an equivariant homeomorphism). 

In this section, we will identify all of the $X_i$'s with  $X$. Under this identification, 
$$P_i=Q_{i-1}\circ \dots \circ Q_0 = Q^i.$$

\begin{lemma}\label{lem:P-self-sim}
  With notation as in Section~\ref{sec:cellulation-and-Q}, let $\sigma\in \cF^3(X)$. Let $D\subset X$ be an uncollapsed ball of $Q$ and suppose that there is a $g\in \Hz$ such that $Q(D)=\zeta[g^{-1}](\sigma)$. Let $\tau=Q(D)$.
  
  With respect to the standard metric $d_\Delta$, $D$ is a regular simplex with side length $a$ and $\sigma$ is a regular simplex with side length $1$, and there is a similarity $h\from (D, d_\Delta) \to (\sigma, d_\Delta)$ with scaling factor $a^{-1}>1$ and a Heisenberg similarity $m\from \H \to \H$, $m(x)=g\delta_n(x)$ such that:
  \begin{equation}\label{eq:self-sim-zoom-in}
    P(x)=m(P(h^{-1}(x))) \qquad\text{for all } x\in \sigma,
  \end{equation}
  and
  \begin{equation}\label{eq:self-sim-zoom-out}
    P(y)=m^{-1}(P(\zeta[g](Q(y))))\qquad \text{for all } y\in X.
  \end{equation}
\end{lemma}
\begin{proof}
  
  
  
  Let $m(x)=g\delta_n(x)$. Since $P=\lim_{i\to \infty} \iota_i\circ Q^i$ and $P$ is equivariant,
  \begin{multline*}
    m(P(y))=\lim_{i\to \infty} g\delta_{n}(\iota_i(Q^i(y))) = \\
    \lim_{i\to \infty} g\iota_{i-1}(Q^{i-1}(Q(y))) = g P(Q(y)) = P(\zeta[g](Q(y))).
  \end{multline*}
  Applying $m^{-1}$ to both sides proves \eqref{eq:self-sim-zoom-out}.

  Equation \eqref{eq:self-sim-zoom-in} follows from \eqref{eq:self-sim-zoom-out}.
  By condition (3) of Lemma~\ref{lem:Qi-conds}, $D$ is a regular simplex with sides of $d_\Delta$--length $a$ and $Q$ sends $D$ to $\tau$ by a similarity that scales the $d_\Delta$--metric by $a^{-1}$.
  Let $h\from D\to \sigma$ be the similarity $h=\zeta[g] \circ Q|_D\from D\to \sigma$. Let $x\in \sigma$ and let $y=h^{-1}(x)$. By \eqref{eq:self-sim-zoom-out},
  $$P(x)=P(\zeta[g](Q(y)))=m(P(y))=m(P(h^{-1}(x))),$$
  which proves \eqref{eq:self-sim-zoom-in}.
  
\end{proof}

If we can find simplices $\sigma$ satisfying Lemma~\ref{lem:P-self-sim}, we can use them to construct self-similar Hölder maps $F\from \R^3\to \H$, i.e., maps such that there are expanding similarities $m\from \H\to \H$ and $h\from \R^3 \to \R^3$ that satisfy $m(F(h^{-1}(x)))=F(x)$ for all $x\in \R^3$. In order to prove Theorem~\ref{thm:3d tangent cone}, however, we want $F$ to be proper, i.e., $F^{-1}(K)$ is compact for every compact $K\subset \H$. 

Since $m$ is expanding, it has a unique fixed point $q\in \H$. If $F$ is proper and $F(p)=q$, then $F(h(p)) = m(F(p)) = q$, so $h(p)\in F^{-1}(q)$, i.e., $F^{-1}(q)$ is invariant under $h$. But the only compact sets that are invariant under $h$ are the empty set and the set $\{p\}$ where $p$ is the unique fixed point of $h$. Thus, in order to use Lemma~\ref{lem:P-self-sim} to construct a proper self-similar map $F\from \R^3\to \H$, we will need to adjust the construction so that there is a point $q\in \H$ such that $P^{-1}(q)$ is a single point.

We need the following lemma (see also \cite{WhiteMappingsThatMinimize}).
\begin{lemma}\label{lem:hurewicz fillings}
  Let $d\ge 3$, let $Y$ be a $(d-2)$--connected simplicial complex, and let $f\from D^d\to Y$ be a continuous map such that $f(\partial D^d)\subset Y^{(d-1)}$ and $f(D^d)\subset Y^{(d)}$.  For each $d$--cell $\delta\in \cF^d(Y)$, the degree of $f$ is the same at every point in the interior of $\delta$; let $\deg_\delta(f)$ be this degree.  Then there is an admissible map $g\from D^d\to Y$ which agrees with $f$ on $\partial D^d$ and such that for every $d$--cell $\delta\in \cF^d(Y)$, the number of uncollapsed balls in $g^{-1}(\delta)$ is $|\deg_\delta(f)|$.  
\end{lemma}
\begin{proof}
  The image $f([D^d])$ of the fundamental class of $D^d$ is a cellular $d$--chain in $Y$ that can be written
  $$f([D^d])=\sum_{\delta\in \cF^d(Y)}\deg_\delta(f) [\delta],$$
  where $[\delta]$ represents the fundamental class of $\delta$, and $\partial f([D^d])=f([\partial D^d])$.

  Let $k=\sum_{\delta} |\deg_\delta(f)|$ and let $B_1,\dots, B_k\subset \inter D^d$ be $k$ disjoint smoothly embedded balls.  For each ball, choose a cell $\delta_i\in \cF^d(Y)$ and a homeomorphism $g_i\from B_i\to \delta_i$ so that each cell $\delta$ is chosen $|\deg_\delta(f)|$ times and so that the orientation of $g_i$ corresponds to the sign of $\deg_\delta(f)$.  

  We will extend the maps $g_i$ to the desired map $g$.  We proceed as in \cite{WY-LipHom}.  Consider the complement $E=D^d\setminus \bigcup_i \inter(B_i)$.  Choose $v\in \partial D^d$ and for each $i$, let $v_i\in \partial B_i$.  For each $i$, let $\gamma_i$ be a simple smooth curve connecting $v$ to $v_i$, and suppose that the $\gamma_i$'s are disjoint.  The interior of the complement $E\setminus \bigcup_i \gamma_i$ is homeomorphic to an open $d$--ball, and we can give $E$ the structure of a CW complex with vertices $v,v_1,\dots, v_k$; edges $\gamma_1,\dots, \gamma_k$; $(d-1)$--cells $\partial D^d$, $\partial B_1,\dots, \partial B_k$; and a single $d$--cell, which we call $\sigma$.  Define $g$ on $E^{(d-1)}$ so that it agrees with $g$ on $\partial D^d$ and with $g_i$ on each $\partial B_i$; since $Y$ is connected, we can extend $g$ on each edge.  It remains to extend $g$ on $\sigma$.

  Since $Y$ is $(d-2)$--connected, so is $Y^{(d-1)}$, and Hurewicz's Theorem implies that $\pi_{d-1}(Y^{(d-1)})\cong H_{d-1}(Y^{(d-1)})$.  Let $\alpha\from S^{d-1} \to E^{(d-1)}$ be the attaching map of $\sigma$.  Then
  \begin{align*}
    g(\alpha([S^{d-1}]))
    &=f([\partial D^d])-\sum_i g([\partial B_i])\\
    &=\sum_{\delta} \deg_{\delta}(f) [\partial \delta] -\sum_{\delta} \deg_{\delta}(f) [\partial \delta]\\
    &=0.
  \end{align*}
  Therefore, $g\circ\alpha$ is null-homotopic, and the obstruction to extending $g$ to a map from $D^d$ to $Y^{(d-1)}$ vanishes.
\end{proof}

Applying this lemma to $Q$ produces a map that satisfies Lemma~\ref{lem:Qi-conds} and also has a simplex with a small preimage.
\begin{lemma}\label{lem:admissible Q degree}
  For any sufficiently large $n$, there is an admissible Lipschitz map $Q\from X\to X$ which is $\Hz$--equivariant as a map from $X_0$ to $X_1$, satisfies Lemma~\ref{lem:Qi-conds}, and satisfies the following conditions.
  \begin{itemize}
  \item For every $\sigma\in \cF^3(X)$, there is a $\tau\in \cF^3(X)$ such that $\sigma=\zeta[g](\tau)$ for some $g\in \Hz$ and $Q^{-1}(\inter \tau)$ is the interior of a single uncollapsed ball in $\sigma$. 
  \item Let $r>0$ be as in Lemma~\ref{lem:Qi-conds}. For every $y\in X \setminus \inter \sigma$, 
  $$d_c(P(y),\iota_1(\tau)) \ge \frac{r}{2}.$$
  \end{itemize} 
\end{lemma}
\begin{proof}
  By Lemma~\ref{lem:hurewicz fillings}, for any sufficiently large $n$, there is a map $Q$ satisfying Lemma~\ref{lem:Qi-conds} with the property that for every $\sigma\in \cF^3(X)$ and every $\delta\in \cF^3(X)$, exactly $\big|\deg_\delta(Q|_\sigma)\big|$ uncollapsed balls of $Q|_\sigma$ map to $\delta$. 
  
  Let $\sigma\in \cF^3(X)$. By Lemma~\ref{lem:Qi-conds}, there are $r>0$ and $q\in \H$ such that if $y\in X\setminus \sigma$, then $d_c(q,\iota_{1}(Q(y)))>r$.
  Suppose that $n>\frac{8c}{r}$ and that $n$ is large enough that for any $d_c$--ball $B$ of radius $\frac{rn}{4}$ in $\H$, there is a $g\in \Hz$ such that $g\iota(\sigma)\subset B$. Then there is a $g\in \Hz$ such that 
  $$g\iota(\sigma)\subset B\bigg(\delta_n(q), \frac{rn}{4};d_c\bigg),$$
  where $B(q,r;d_c)$ is the metric ball with respect to $d_c$.
  Let $\tau=\zeta[g](\sigma)$. Then
  $$\iota_1(\tau) = 
  \delta_{n^{-1}}(g\iota(\sigma))\subset 
  B\left(q, \frac{r}{4};d_c\right),$$
  and by the triangle inequality, for every $y\in X\setminus \sigma$, 
  $$d_c(\iota_1(Q(y)),\iota_1(\tau)) > \frac{3r}{4},$$
  i.e., $Q(y)\not \in \tau$. Thus $Q^{-1}(\tau)\subset \sigma$. Since $Q$ is degree--1, we have $\deg_\tau(Q|_\sigma)=1$, so there is exactly one uncollapsed ball that maps to $\tau$.
  Finally, by \eqref{eq:unif Cauchy} and our choice of $n$, for every $y\in X\setminus \sigma$,
  $$d_c(P(y),\iota_1(\tau)) > \frac{3r}{4} - 2cs  > \frac{r}{2}.$$
  By continuity, if $y\in X\setminus \inter \sigma$, then $d_c(P(y),\iota_1(\tau)) \ge \frac{r}{2}$.
\end{proof}

Now we prove Theorem~\ref{thm:3d tangent cone}.
\begin{proof}[{Proof of Theorem~\ref{thm:3d tangent cone}}]
  Let $\sigma\in \cF^3(X)$ and let $\tau$ satisfy Lemma~\ref{lem:admissible Q degree} for some large $n$ to be chosen later. Let $D\subset \sigma$ be the uncollapsed ball of $Q$ such that $Q(D)=\tau$. Let $g\in \Hz$ be such that $\tau=\zeta[g^{-1}]\sigma$.

  By Lemma~\ref{lem:P-self-sim}, there are a similarity $h\from D\to \sigma$ with scale factor $a^{-1}$ and a similarity $m\from \H\to \H$, $m(x)=g\delta_n(x)$ such that $P(x)=m(P(h^{-1}(x)))$ for all $x\in \sigma$. 
  We identify $\sigma$ with a unit simplex in $\R^3$ and $D\subset\sigma$ with a subset of that simplex. Then we can extend $h$ to a similarity $h\from \R^3\to \R^3$ that sends $D$ to $\sigma$ and has scaling factor $n$. Since $n>1$, $h$ has a unique fixed point $p$ such that $\lim_{i\to \infty} h^{-i}(x)=p$ for every $x\in \R^3$; since $h^{-1}(\sigma)=D\Subset \sigma$, we have $p\in \inter D\subset \inter \sigma$.
  
  We define $F\from \R^3\to \H$ as follows. 
  Let $x\in \R^3$. Since $\lim_{i\to \infty} h^{-i}(x)=p$, there is an $i\in \Z$ such that $h^{-i}(x)\in \sigma$. Let
  $$F(x)=m^i(P(h^{-i}(x))).$$

  Then $F$ is well-defined; if $i<j$ and $h^{-i}(x)\in \sigma$, then Lemma~\ref{lem:P-self-sim} implies
  \begin{align*}
    m^{i}(P(h^{-i}(x)))
      =m^{i}(m^{j-i} \circ P \circ h^{i-j}(h^{-i}(x))) 
      =m^{j}(P(h^{-j}(x))).
  \end{align*}
  Furthermore, for all $x\in h^i(\sigma)$, we have $h^{-1}(x)\in h^{i-1}(\sigma)$, so
  $$m(F(h^{-1}(x))) = m(m^{i-1}(P(h^{-i+1}(h^{-1}(x))))) = m^{-i}(P(h^{i}(x))) = F(x).$$
  That is, $F$ is self-similar.

  It remains to show that $F$ is globally Hölder, proper, and degree $1$. 
  
  First, we show that $F$ is $\eta$--Hölder for $\eta:=\frac{\log s}{\log a}$. Since $P$ is locally $\eta$--Hölder, the restriction $P|_\sigma$ is $(C, \eta)$--H\"older for some constant $C$. Let $x,y\in \R^3$. Then there exists $i\in \Z$ such that $x,y\in h^i(\sigma)$ and
  \begin{align*}
    d_c(F(x),F(y)) &= d_c(m^{i}(P(h^{-i}(x))), m^{i}(P(h^{-i}(y))))\\
     & = n^i d_c(P(h^{-i}(x)), P(h^{-i}(y))) \\ 
     &\le C n^i |h^{-i}(x)-h^{-i}(y)|^\eta\\
     &\le C n^i (a^i |x-y|)^\eta\\
     & = C |x-y|^\eta.
  \end{align*}
  Thus $F$ is globally $\eta$--H\"older.  
  
  Next, we show $F$ is proper. Let $q:=F(p)=P(p)\in \H$. Then $q$ is the unique fixed point of $m$, because $m(q)=m(F(p))=F(h(p))=F(p)$. Let $r$ be as in Lemma~\ref{lem:admissible Q degree}, so that $d_c(P(y),\iota_1(\tau))>\frac{r}{2}$ for every $y\in X\setminus \sigma$. We will show that if $n$ is sufficiently large, and $z\in \R^3\setminus \sigma$, then $d_c(P(z),q) > \frac{r}{4}$. We will then use self-similarity to conclude that $F$ is proper.
  
  First, we claim that when $n$ is sufficiently large
  \begin{equation} \label{eq:degree-around-fixed}
    d_c(P(y),q) > \frac{r}{4}\qquad \text{for all $y\in X\setminus \inter \sigma$.}
  \end{equation}
  Let $c$ be as in Lemma~\ref{lem:Qi-conds} and Lemma~\ref{lem:admissible Q degree} and suppose that $n>\frac{8c}{r}$. Since $p\in D$, we have $Q(p)\in \tau$. On one hand, by Lemma~\ref{lem:admissible Q degree}, for any $y\in X\setminus \inter \sigma$, we have 
  $d_c(P(y),\iota_1(Q(p)))\ge \frac{r}{2}$; on the other hand, by \eqref{eq:unif Cauchy},
  $$d_c(q,\iota_1(Q(p))) = d_c(P(p), \iota_1(Q(p))) < 2cs < \frac{r}{4},$$
  so the triangle inequality implies \eqref{eq:degree-around-fixed}.
  
  Now suppose that $z \in \R^3\setminus \sigma$. Let $i>0$ be such that $h^{-i}(z)\in \sigma\setminus D$ and let $y=h^{-i}(z)$ so that $F(z)=m^i(P(h^{-i}(z)))=m^i(P(y))$.
  Since $y\in \sigma\setminus D$, we have $Q(y)\not \in \inter \tau$ and $\zeta[g](Q(y)) \not\in \inter \sigma$.
  By \eqref{eq:self-sim-zoom-out},
  $$P(y)=m^{-1}(P(\zeta[g](Q(y)))),$$
  so by \eqref{eq:degree-around-fixed},
  $$d_c(P(y), q) = n^{-1} d_c(m(P(y)),q) = n^{-1} d_c\big(P(\zeta[g](Q(y))),q\big) > \frac{r}{4n},$$
  and
  $$d_c(F(z),q) = n^i d_c(P(y),q) > \frac{r}{4},$$
  as desired.
  
  By the scale-invariance of $F$, if $z\in \R^3\setminus h^i(\sigma)$, then $d_c(F(z),q)>\frac{r}{4}n^i$. Let $K\subset \H$ be a compact set. Then there is some $i\in \Z$ such that $K\subset B(q,\frac{r}{4}n^i;d_c)$ and thus $F^{-1}(K)\subset h^i(\sigma)$. Since $F^{-1}(K)$ is closed and bounded, it is compact. Therefore, the preimage of any compact set is compact, and $F$ is proper.
 
  Finally, we claim that $F$ has degree $1$. It suffices to show that $\deg_q(F)=1$. Let $B=B(q,\frac{r}{4};d_c)$. Then $F^{-1}(B)\subset \sigma$, so
  $$\deg_q(F) = \deg_q(F|_\sigma) = \deg_q(P|_\sigma) = \deg_q(P)=1.$$
  Since $F$ is proper, the degree is constant on $\H$.
\end{proof}

\appendix

\section{Proofs of Lemmas~\ref{lem:quant-admissible} and \ref{lem:normalizing uncollapsed}}\label{app:admissible-maps}

The aim of the appendix is to prove Lemmas~\ref{lem:quant-admissible} and \ref{lem:normalizing uncollapsed}. We first show:

\begin{lemma}\label{lem:def-to-admissible}
 Let $Z$ be a finite dimensional simplicial complex equipped with the standard metric and let $\varrho\from D^d\to Z$ be a Lipschitz map such that $\varrho(\partial D^d)\subset Z^{(d-1)}$. Then $\varrho$ is homotopic relative to $\partial D^d$ to an admissible map $\overline{\varrho}\from D^d\to Z$ with $$\avol(\overline{\varrho})\leq c\cdot \vol^d(\varrho),$$ where $c$ only depends on the dimension of $Z$.  The homotopy between $\varrho$ and $\overline{\varrho}$ is Lipschitz with image in $\supp(\varrho(D^d))$ and its $(d+1)$--volume is bounded from above by $c\cdot \vol^d(\varrho)$. 
\end{lemma}
If $d=1$ then we may assume that $\overline{\varrho}$ is an edge path parametrized with constant speed. 

\begin{proof}
 By the (proof of the) Federer-Fleming deformation theorem, see for example \cite[Theorem 10.3.3]{Epstein-et-al-Wordprocess}, $\varrho$ is Lipschitz homotopic relative to $\partial D^d$ to a map $\hat{\varrho}\from D^d\to Z^{(d)}$ such that the $d$--volume of $\hat{\varrho}$ and the $(d+1)$--volume of the homotopy are both bounded from above by $c\cdot \vol^d(\varrho)$ for some constant $c$ only depending on the dimension of $Z$. Moreover, the image of the homotopy is contained in $\supp(\varrho(D^d))$. Arguing almost as in the proof of Lemma~2.3 of \cite{BBFS}, one shows that $\hat{\varrho}$ is homotopic relative to $\partial D^d$ to an admissible map $\overline{\varrho}$ with $\avol(\overline{\varrho})\leq c\cdot \vol^d(\varrho)$ through a Lipschitz homotopy of zero volume and with image in $\supp(\hat{\varrho}(D^d))$. This proves the lemma.
\end{proof}

\begin{proof}[Proof of Lemma~\ref{lem:quant-admissible}]
 Denote by $N$ the dimension of $Y$. Let $\psi\from Y\to Z$ be as in the statement of the lemma.
 For $0\leq d\leq N$, let $Y_d$ be the mapping cylinder of the inclusion $Y^{(d)}\subset Y$, i.e.
 $$Y_d = (Y\times\{0\})\cup (Y^{(d)}\times [0,1])\subset Y\times[0,1].$$
 We will construct $\overline{\psi}$ by constructing a sequence of auxiliary maps $\psi_d\from Y_d\to Z$ such that for all $d$ and all $y\in Y$, $\psi_d(y,0)=\psi(y)$, $\psi_{d+1}$ extends $\psi_d$, and $\overline{\psi|_{Y^{(d)}}}:=\psi_d(\cdot,1)$ satisfies the conditions of the lemma. Then $\overline{\psi}=\psi_N(\cdot,1)$ is the desired map.
 
 
 Let $\psi_0$ be the Lipschitz map that coincides with $\psi$ on $Y\times\{0\}$ and such that for $v\in\cF^0(Y)$ the following holds: $\psi_0(v,1)$ is a point in $Z^{(0)}$ which is closest to $\psi_0(v,0)=\psi(v)$ and $\psi_0(v,\cdot)$ is the constant speed para\-metrization of the segment between its endpoints. Since $\psi_0(v,1)$ and $\psi(v)$ lie in the same simplex of $Z$, $\overline{\psi|_{Y^{(0)}}}:=\psi_0(\cdot,1)$ satisfies the lemma and each segment $\psi_0(v\times [0,1])$ has length at most $1$.
 
 We use Lemma~\ref{lem:def-to-admissible} to extend $\psi_0$ to a map $\psi_1$ as follows. Let $e\in\cF^1(Y)$ be an edge. Then $(e\times\{0\})\cup (\partial e \times[0,1])$ is a path in $Y_0$, and $$\alpha_e=\psi_0|_{(e\times\{0\})\cup (\partial e \times[0,1])}$$
 is a path in $Z$. By Lemma~\ref{lem:def-to-admissible}, we can define $\psi_1$ on $e\times [0,1]$ so that $\psi_1|_{e\times \{1\}}$ is a constant-speed edge path approximating $\alpha_e$ and so that $\psi_1|_{e\times [0,1]}$ is a homotopy between $\alpha_e$ and its approximation. We obtain $\psi_1$ by repeating this process for every edge of $Y$.
 
 Then $\psi_1$ is a Lipschitz map extending $\psi_0$ and it has the following properties. For each edge $e\in \cF^1(Y)$, $\psi_1|{e\times \{1\}}$ is an edge path para\-metrized with constant speed. Moreover, the length of $\psi_1|_{e\times\{1\}}$ and the $2$--volume of $\psi_1|_{e\times[0,1]}$ are both bounded from above by $c\cdot \length(\alpha_e) \le c\cdot \length(\psi|_e) + 2c$ for some $c=c(\dim Z)$. Finally, $\psi_1(e\times[0,1])\subset\supp(\psi(e))$, so $\overline{\psi|_{Y^{(1)}}}:=\psi_1(\cdot,1)$ satisfies the lemma.
 
 We repeat the process to obtain a Lipschitz map $\psi_d\from Y_d\to Z$ for each $d=2,3,\dots, N$. That is, we suppose that $\psi_{d-1}$ is already defined and for each $\sigma\in\cF^d(Y)$, we define 
 $$\alpha_\sigma=\psi_{d-1}|_{(\sigma\times\{0\})\cup (\partial \sigma \times[0,1])}.$$
 This is a Lipschitz map from a $d$--disc to $Z$. By Lemma~\ref{lem:def-to-admissible}, we can define $\psi_d$ on $\sigma \times [0,1]$ so that $\psi_d|_{\sigma \times \{1\}}$ is an admissible map approximating $\alpha_\sigma$ and so that $\psi_d|_{\sigma \times [0,1]}$ is the homotopy between $\alpha_\sigma$ and its approximation. 

 Then $\psi_d$ extends $\psi_{d-1}$ and has the following properties. For every $\sigma\in\cF^d(Y)$ the restriction $\psi_d|_{\sigma\times\{1\}}$ is admissible and $\psi_d(\sigma\times[0,1])\subset \supp(\psi(\sigma))$. Moreover, both $\avol(\psi_d|_{\sigma\times\{1\}})$ and $\vol^{d+1}(\psi_d|_{\sigma\times[0,1]})$ are bounded from above by 
 $$c\cdot \vol^{d}(\alpha_\sigma) \le c\cdot\vol^d(\psi|_{\sigma}) + c\cdot \vol^d(\psi_{d-1}|_{\partial \sigma\times[0,1]}) \le b_d \sum_{\sigma'\in \cF(\sigma)} \vol^{\dim(\sigma')}(\psi|_{\sigma'})$$
 for some $b_d>0$ depending on $d$ and the dimension of $Z$. That is, $\overline{\psi|_{Y^{(d)}}}:=\psi_d(\cdot,1)$ satisfies the lemma for some constant $b_N$ depending on the dimensions of $Y$ and $Z$, and we define $\overline{\psi}:=\psi_{N}(\cdot,1)$. 
 In fact, if $\sigma\in \cF^d(Y)$ and $d>\dim Z$, then $\avol(\overline{\psi}|_\sigma)=0$, so $\overline{\psi}$ satisfies the lemma for $b=b_{\dim Z}$.
 
If $\psi$ is equivariant then we can take all the maps $\psi_d$ to be equivariant as well, and hence also $\overline{\psi}$.
\end{proof}

In preparation for the proof of Lemma~\ref{lem:normalizing uncollapsed} we first establish the following results.

\begin{lemma}\label{lem:collared admissible}
  Let $Z$ be a finite dimensional simplicial complex. If $f\from D^d\to Z$ is an admissible map with uncollapsed balls $B_1,\dots, B_k$, it is homotopic to an admissible map $f'\from D^d\to Z$ such that $f$ and $f'$ agree on $D^d\setminus \bigcup B_i$, $\avol(f)=\avol(f')$, and each uncollapsed ball $B_i'$ of $f'$ is a collared ball contained in $B_i$.
\end{lemma}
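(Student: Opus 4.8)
The plan is to modify $f$ separately inside each ball $B_i$, leaving $f$ unchanged on $D^d\setminus\bigcup_i B_i$, and to take as the new uncollapsed ball a concentric shrinking of $B_i$ in coordinates adapted to the cell $f(B_i)$. Fix $i$ and let $\sigma_i\in\cF^d(Y)$ be the cell with $f(B_i)=\sigma_i$, with characteristic map $r_{\sigma_i}\from D^d\to Y$. Since $f|_{B_i}$ is a homeomorphism onto $\sigma_i$, the map $\phi_i:=(f|_{B_i})^{-1}\circ r_{\sigma_i}\from D^d\to B_i$ is a homeomorphism with $f\circ\phi_i=r_{\sigma_i}$, and I set $B_i':=\phi_i(\tfrac{1}{2}D^d)$. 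By construction $B_i'\subset B_i$ is a collared ball, $\phi_i$ being an embedding of $D^d$ into $D^d$ that carries $\tfrac{1}{2}D^d$ onto $B_i'$; the interiors of the $B_i'$ are pairwise disjoint because those of the $B_i$ are.

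\emph{Defining $f'$.} Put $f':=f$ on $D^d\setminus\bigcup_i B_i$, and on each $B_i$, writing a point as $\phi_i(tu)$ with $u\in S^{d-1}$, $t\in[0,1]$, set
$$f'(\phi_i(tu)):=\begin{cases} r_{\sigma_i}(2tu), & t\le \tfrac{1}{2},\\ r_{\sigma_i}(u), & t\ge \tfrac{1}{2}.\end{cases}$$
The two branches agree at $t=\tfrac{1}{2}$, and at $t=1$ the lower branch returns $r_{\sigma_i}(u)=f(\phi_i(u))$, so $f'$ is continuous and agrees with $f$ on $\partial B_i$, hence on all of $D^d\setminus\bigcup_i\inter B_i$. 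On $B_i'$ the map $f'=r_{\sigma_i}\circ(x\mapsto 2x)\circ\phi_i^{-1}$ is a homeomorphism onto $\sigma_i$; on the collar $B_i\setminus\inter B_i'$ it takes values in $r_{\sigma_i}(S^{d-1})=\partial\sigma_i\subset Y^{(d-1)}$; and off the $B_i$ it equals $f$, which lands in $Y^{(d-1)}$ by admissibility of $f$. Thus $f'$ is admissible with uncollapsed balls $B_1',\dots,B_k'$, and in particular $\avol(f')=k=\avol(f)$.

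\emph{The homotopy.} Since $f$ and $f'$ coincide off $\bigcup_i\inter B_i$, it is enough to homotope $f|_{B_i}$ to $f'|_{B_i}$ rel $\partial B_i$ for each $i$ and glue with the constant homotopy elsewhere. Transporting through $\phi_i$ and using $f\circ\phi_i=r_{\sigma_i}$ and $f'\circ\phi_i=r_{\sigma_i}\circ\Psi$, where $\Psi\from D^d\to D^d$ is the radial map $tu\mapsto 2tu$ for $t\le\tfrac{1}{2}$ and $tu\mapsto u$ for $t\ge\tfrac{1}{2}$, it suffices to homotope $\id_{D^d}$ to $\Psi$ rel $S^{d-1}$. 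The straight-line homotopy $\Psi_s=(1-s)\id+s\Psi$ does this: it fixes $S^{d-1}$ pointwise, and since $\Psi(x)=\lambda(x)x$ with $1\le\lambda(x)\le\|x\|^{-1}$ for $x\ne 0$, each $\Psi_s$ maps $D^d$ into $D^d$. Composing with $r_{\sigma_i}$ and conjugating by $\phi_i$ gives the required homotopy $f|_{B_i}\simeq f'|_{B_i}$ rel $\partial B_i$.

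\emph{Main point.} There is essentially no obstacle here; the content is the explicit construction. The only facts needing genuine (but routine) verification are that each $B_i$ is an honest topological $d$-ball, so that $\phi_i$ exists and $B_i'$ is truly collared — immediate from the definition of an admissible map, since $B_i$ is homeomorphic to the embedded cell $\sigma_i\cong D^d$ — and that the radial collapse $\Psi$ is homotopic to $\id$ rel boundary, as above. No Lipschitz or volume estimates enter, since the homotopy in the statement is only required to be continuous.
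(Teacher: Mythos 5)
Your proof is correct and follows essentially the same route as the paper's: define $B_i'$ as the image of $\tfrac{1}{2}D^d$ under a homeomorphism $D^d\to B_i$, then post-compose $f|_{B_i}$ with a radial collapse that maps $B_i'$ homeomorphically onto $B_i$ while sending the collar to $\partial B_i$. Your version merely makes explicit the choice of homeomorphism $\phi_i=(f|_{B_i})^{-1}\circ r_{\sigma_i}$ and the straight-line homotopy $\id\simeq\Psi$ rel $S^{d-1}$, both of which the paper leaves implicit.
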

\begin{proof}
  Each $B_i$ is homeomorphic to a closed ball.  Let $B'_i\subset B_i$ be the image of the ball $B(0,\frac{1}{2})\subset\R^d$ under a homeomorphism from $B(0,1)\subset\R^d$ to $B_i$.  Then there is a map $\zeta_i\from B_i\to B_i$ that sends $B'_i$ homeomorphically to $B_i$, such that $\zeta_i(B_i\setminus B'_i)\subset \partial B_i$ and $\zeta_i$ is the identity on $\partial B_i$.  Let $f'\from D^d\to Z$ be the map such that $f'$ agrees with $f$ outside the uncollapsed balls $B_i$ and such that $f'(x)=f(\zeta_i(x))$ for all $x\in B_i$.  This is an admissible map whose uncollapsed balls are the collared balls $B'_1,\dots, B'_k$.
\end{proof}

\begin{lemma}\label{lem:connected sums}
  Let $Z$ be a finite dimensional simplicial complex and let $f\from D^d\to Z$ be an admissible map with uncollapsed balls $B_1,\dots, B_k$.  Let $\sigma_i=f(B_i)\in \cF^d(Z)$ for all $i$.  Let $C_1,\dots, C_k\subset D^d$ be a set of disjoint collared balls, and for $i=1,\dots, k$, let $\varphi_i\from C_i\to \sigma_i$ be a homeomorphism such that $(f|_{B_i})^{-1} \circ \varphi_i\from C_i \to B_i$ preserves orientation.  Then there is an admissible map $g\from D^d\to Z$ with uncollapsed balls $C_1,\dots, C_k$ such that $g|_{C_i}=\varphi_i$ and $g|_{\partial D^d}=f|_{\partial D^d}$.  Furthermore, there is a homotopy between $f$ and $g$ that fixes $\partial D^d$ pointwise and has image in $\supp f(D^{d})$. If $f|_{\partial D^d}$ and $\varphi_i$ are Lipschitz then we can take $g$ to be Lipschitz as well.
\end{lemma}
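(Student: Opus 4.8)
The plan is to obtain $g$ by precomposing $f$ with a self-homeomorphism of $D^d$ that is ambient isotopic to the identity rel $\partial D^d$ and that carries each old uncollapsed ball $B_i$ onto the prescribed new ball $C_i$ with the correct identification. Write $\psi_i := (f|_{B_i})^{-1}\circ \varphi_i\from C_i\to B_i$; by hypothesis $\psi_i$ is an orientation-preserving homeomorphism. I claim it suffices to produce a homeomorphism $\Theta\from D^d\to D^d$ and an ambient isotopy $(\Theta_t)_{t\in[0,1]}$ from $\Theta_0=\id$ to $\Theta_1=\Theta$ such that every $\Theta_t$ fixes $\partial D^d$ pointwise, $\Theta(B_i)=C_i$, and $\Theta|_{B_i}=\psi_i^{-1}$ for all $i$. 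Granting this, set $g:=f\circ\Theta^{-1}$: then $g|_{\partial D^d}=f|_{\partial D^d}$; since $\Theta^{-1}$ carries $D^d\setminus\bigcup_i C_i$ onto $D^d\setminus\bigcup_i B_i$ we get $g(D^d\setminus\bigcup_i C_i)=f(D^d\setminus\bigcup_i B_i)\subset X^{(d-1)}$; and $g|_{C_i}=f\circ\psi_i=\varphi_i$. Thus $g$ is admissible with uncollapsed balls $C_1,\dots,C_k$, and $t\mapsto f\circ\Theta_t^{-1}$ is a homotopy from $f$ to $g$ that fixes $\partial D^d$ pointwise and has image $f(D^d)$.

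Before constructing $\Theta$ I would reduce to the case in which the $B_i$ are collared, hence locally flat. By Lemma~\ref{lem:collared admissible}, $f$ is homotopic to an admissible map $f'$ whose uncollapsed balls $B_i'\subset B_i$ are collared, which agrees with $f$ on $D^d\setminus\bigcup_i B_i$, and which restricts on $B_i'$ to an orientation-preserving reparametrisation of $f|_{B_i}$ (so $f'(B_i')=\sigma_i$). On each $B_i$ the maps $f$ and $f'$ take values in $\sigma_i\cong D^d$ and agree on $\partial B_i$, so the straight-line homotopy in the $D^d$-coordinates of $\sigma_i$ shows this homotopy may be taken rel $\partial D^d$ with image in $f(D^d)$. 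Since $(f'|_{B_i'})^{-1}\circ\varphi_i$ is again orientation-preserving, we may replace $(f,B_i)$ by $(f',B_i')$, build the corresponding $g$, and concatenate the two homotopies; so from now on the $B_i$ are collared.

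To build $\Theta$ I would compose three ambient isotopies of $D^d$, each supported in $\inter D^d$ and fixing $\partial D^d$: a \emph{shrink}, a \emph{relocate}, and an \emph{expand} stage. Using local flatness of the $B_i$ and the isotopy extension theorem, first shrink each $B_i$, within pairwise disjoint small neighbourhoods, onto a small ball $\tilde B_i$ in $\inter B_i$; the individual isotopies combine into one. Next, since $\inter D^d$ is connected — and, for the dimensions of interest $d\ge 2$, stays connected after deleting finitely many points — slide the $\tilde B_i$ to small balls $\tilde C_i\subset\inter C_i$, moving them one at a time along tubes that avoid the others. Finally, using the collar of each $C_i$, expand $\tilde C_i$ onto $C_i$ by an ambient isotopy supported near $C_i$. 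Composing the three stages gives $\Theta_t$ with $\Theta(B_i)=C_i$. To get the restrictions right, observe that the shrink and relocate stages restrict on $B_i$ to certain orientation-preserving homeomorphisms $a_i\from B_i\to\tilde B_i$ and $b_i\from\tilde B_i\to\tilde C_i$, whereas in the expand stage one is free to realise on $\tilde C_i$ any prescribed orientation-preserving homeomorphism onto $C_i$ (work inside a collar ball slightly larger than $C_i$, fixing its boundary, and use that every orientation-preserving self-homeomorphism of $S^{d-1}$ is isotopic to the identity). Choosing this homeomorphism to be $\psi_i^{-1}\circ b_i^{-1}\circ a_i^{-1}$ — which is orientation-preserving — makes $\Theta|_{B_i}=\psi_i^{-1}$, as required.

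The step I expect to be the main obstacle is precisely the topological input used throughout: that an orientation-preserving homeomorphism between locally flat balls in $\inter D^d$ is realised by an ambient isotopy fixing $\partial D^d$, together with the fact $\pi_0(\mathrm{Homeo}^+(S^{d-1}))=0$ needed in the expand stage. For $d=2$ and $d=3$ — the only cases used in this paper — these are classical; for general $d$ they are part of the standard isotopy-extension and collar-uniqueness package. Everything else — disjointness of supports, concatenation of isotopies and homotopies, and the orientation bookkeeping — is routine.
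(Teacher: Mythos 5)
Your proposal is correct and follows essentially the same route as the paper: after reducing to collared uncollapsed balls via Lemma~\ref{lem:collared admissible}, you precompose $f$ with a self-homeomorphism of $D^d$ that is isotopic to the identity rel $\partial D^d$, carries each $C_i$ onto $B_i$ by $(f|_{B_i})^{-1}\circ\varphi_i$, and maps the complement of the new balls onto the complement of the old ones; your $\Theta^{-1}$ is exactly the paper's $h$. The only difference is one of explicitness: where the paper invokes the uniqueness of connected sums as a black box to produce $h$ together with a homotopy to the identity, you unwind that black box into the standard shrink--relocate--expand ambient-isotopy argument (isotopy extension for locally flat balls, connectedness of $\inter D^d$ minus finitely many points, $\pi_0(\mathrm{Homeo}^+(S^{d-1}))=0$), and you are also more careful to record that the reduction step of Lemma~\ref{lem:collared admissible} can be done rel $\partial D^d$ with image in $f(D^d)$, which the paper leaves implicit. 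Both versions are fine; yours trades brevity for a self-contained account of the topological input.
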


\begin{proof}
  By Lemma~\ref{lem:collared admissible}, we may suppose that the $B_i$ are disjoint collared balls.  By the uniqueness of connected sums, the complements $J=D^d \setminus \bigcup_i \inter(B_i)$ and $K=D^d \setminus \bigcup_i \inter(C_i)$ are homeomorphic; indeed, any collection of orientation-preserving homeomorphisms $h_i\from \partial C_i\to \partial B_i$ can be extended to a homeomorphism $h\from K\to J$ that fixes $\partial D^d$ pointwise.  Thus, there exists a homeomorphism $h\from D^d\to D^d$ which fixes $\partial D^d$ pointwise and satisfies $h|_{C_i}=(f|_{B_i})^{-1} \circ \varphi_i$ for all $i=1,\dots, k$. Let $g=f\circ h$.  This map has the desired restrictions.  Since $h$ is homotopic to $\id_{D^d}$, $g$ is homotopic to $f$.
  
  Suppose now that $f|_{\partial D^d}$ and the $\varphi_i$ are Lipschitz. Then $g(D^d)\subset f(D^d)\subset \supp f(D^d)$ and $g(K)\subset Z^{(d-1)}$. Since $\supp g(K)\subset Z^{(d-1)}$ is locally Lipschitz $n$--connected for every $n$ we can use the Lipschitz extension theorems in \cite{Alm62} or \cite{Hoh93} to approximate $g$ arbitrarily closely by an admissible Lipschitz map $g'$ which still satisfies $g'|_{C_i}= \varphi_i$ and $g'|_{\partial D^d} = f|_{\partial D^d}$ and is still homotopic to $f$ via a homotopy that fixes $\partial D^d$ pointwise.  
\end{proof}

We are ready to prove Lemma~\ref{lem:normalizing uncollapsed}.

\begin{proof}[Proof of Lemma~\ref{lem:normalizing uncollapsed}]
 We construct a homotopy between $f$ and a suitable map $h$ satisfying the properties of the lemma by repeatedly applying Lemma~\ref{lem:connected sums} as follows.
 Let $N$ be the dimension of $Y$. For $1\leq d\leq N$ set 
 $$Y_d = (Y\times\{0\})\cup (Y^{(d)}\times[0,1])\subset Y\times[0,1].$$ 
 We will define maps $H_d\from Y_d\to Z$ such that $H_{d+1}$ extends $H_d$ and such that $H_N$ is the desired homotopy. Let $H_1$ be the map such that $H_1(y,0) = f(y)$ for all $y\in Y$ and $H_1(y,t)= f(y)$ for all $(y,t)\in Y^{(1)}\times[0,1]$. 
 
 Let $2\le d\le N$ and suppose that we have already defined $H_{d-1}$ with $H_{d-1}|_{\sigma\times \{1\}}$ admissible and $H_{d-1}(\sigma\times[0,1])\subset\supp f(\sigma)$ for all $\sigma\in\cF^{d-1}(Y)$. Let  $\sigma\in\cF^d(Y)$. Then $H_{d-1}(\partial\sigma \times [0,1])\subset\supp f(\partial\sigma)\subset Z^{(d-1)}$ and thus $H_{d-1}$ is admissible on the $d$--disc $(\sigma\times\{0\})\cup(\partial\sigma\times[0,1])$.
 By Lemma~\ref{lem:connected sums} there exists a continuous map $H_{d}|_{\sigma\times[0,1]}$ which coincides with $H_{d-1}$ on 
 $(\sigma\times\{0\})\cup(\partial\sigma \times[0,1])$ and such that $H_d|_{\sigma\times\{1\}}$ satisfies Lemma~\ref{lem:normalizing uncollapsed}. That is, $H_d|_{\sigma\times\{1\}}$ is admissible and its uncollapsed balls are the $C_i^\sigma$'s and for each $i$ it agrees with $g_i^\sigma$ on $C_i^\sigma$. Moreover, $H_d(\sigma\times[0,1])\subset \supp f(\sigma)$. Since $\sigma$ was arbitrary this defines a map $H_d$ on all of $Y_d$.
 
 Applying this construction repeatedly we obtain maps $H_d\from Y_d\to Z$ for $d=2,\dots, N$. The map $H=H_N\from Y\times[0,1]\to Z$ is a homotopy from $f$ to the map $h\colon Y\to Z$ given by $h=H_N(\cdot, 1)$ and it follows from the construction that $h$ has the properties asserted in the statement of the lemma.
\end{proof}

\bibliography{bibli-Hoelder-Carnot}
\end{document}